\newtheorem{lemma}{Lemma}
\newtheorem{theorem}{Theorem}
\newtheorem{corollary}{Corollary}
\theoremstyle{remark}
\newtheorem{remark}{Remark}
\theoremstyle{definition}
\DeclareMathOperator\re{{Re}}
\DeclareMathOperator\im{{Im}}
\numberwithin{equation}{section}
\newcommand{\HM}{\mathrm{HM}}
\numberwithin{equation}{section}
\newcounter{comment}
\begin{document}
\title{Asymptotics and total integrals of the $\mathrm{P}_{\rm I}^{2}$ tritronqu\'{e}e solution and its Hamiltonian}

\author{Dan Dai\footnotemark[1] \ and Wen-Gao Long\footnotemark[2]}

\renewcommand{\thefootnote}{\fnsymbol{footnote}}
\footnotetext[1]{Department of Mathematics, City University of Hong Kong, Tat Chee
Avenue, Kowloon, Hong Kong.\\ E-mail: \texttt{dandai@cityu.edu.hk}}
\footnotetext[2]{School of Mathematics and Computational Science, Hunan University of Science and Technology, Xiangtan, Hunan, China. \\
E-mail: \texttt{longwg@hnust.edu.cn} (corresponding author)} 

\date{\today}

\maketitle

\begin{abstract}
We study the tritronqu\'{e}e solution $u(x,t)$ of the $\mathrm{P}_{\rm I}^{2}$ equation, the second member of the Painlev\'{e} I hierarchy. This solution is pole-free on the real line and has various applications in mathematical physics. We obtain a full asymptotic expansion of $u(x,t)$ as $x\to\pm \infty$, uniformly for the parameter $t$ in a large interval. Based on this result, we successfully derive the total integrals of $u(x,t)$ and the associated Hamiltonian.
\end{abstract}


\vspace{5mm}

\noindent {\it MSC 2010 subject classifications}: 33E17; 34M55; 41A60.

\noindent {\it Keywords}:  Painlev\'{e} I hierarchy; KdV equation; full asymptotic expansion; total integrals; Riemann-Hilbert method. 

\section{Introduction}
The second member in the Painlev\'{e} I hierarchy, also referred as the $\textrm{P}_{\textrm I}^2$ equation, is the following fourth-order ordinary differential equation
\begin{equation}\label{eq-PI-hierarchy}
u_{xxxx}+10u_{x}^2+20uu_{xx}+40(u^3-6tu+6x)=0 
\end{equation}
with a parameter $t \in \mathbb{R}.$ Like the Painlev\'e I equation
\begin{equation}\label{eq-PI-def}
u_{xx} = 6u^2 + x,
\end{equation}
general solutions $u(x,t)$ of $\textrm{P}_{\textrm I}^2$ are meromorphic in $x$, and have infinitely many poles in the complex-$x$ plane. However, researchers have identified a class of pole-free solutions $u(x,t)$ for $x$ on the real axis. These pole-free solutions play a significant role in various topics of mathematical physics.

In the study of string theory, Br\'{e}zin et al. \cite{Bre-Mar-Par} and Moore \cite{Moore} discovered that, the equation \eqref{eq-PI-hierarchy} (with $t=0$) possesses a special solution which is pole-free on the real line and satisfies the following asymptotic behavior:
\begin{equation} \label{eq-p1-BC}
u(x,t=0) \sim \mp|6x|^{\frac{1}{3}}, \qquad \text{as }  x\to\pm\infty.
\end{equation}
Later, uniqueness of the real pole-free solution $u(x,t=0)$ was proved in Kapaev \cite{Kapaev1995}. For general $t\neq 0$, Dubrovin \cite{Dubrovin-2006} conjectured that there exists a unique real pole-free solution $u(x,t)$ as well. The existence of the real pole-free solution $u(x,t)$ to $\textrm{P}_{\textrm I}^2$ was proved in Claeys and Vanlessen \cite{Claeys-Vanlessen-2007} for any $t \in \mathbb{R}$. The main motivation to study this solution in \cite{Dubrovin-2006} is that, Dubrovin suggested the particular solution $u(x,t)$ describes the universal asymptotics for Hamiltonian perturbations of hyperbolic equations near the point of gradient catastrophe for the unperturbed equation. Nowadays, this is known as the \emph{universality conjecture} for Hamiltonian PDEs; see also \cite{Dubrovin-Grava-Klein-Moro-2015}. As far as we know, the conjecture has only been verified for  the Korteweg-de Vries equation (KdV)
\begin{equation}\label{eq-kdv}
u_{t}+uu_{x}+\frac{1}{12}u_{xxx}=0
\end{equation}
and its hierarchy in \cite{Claeys-Grava-2009,Claeys-Grava-2012}. Recently, in the study of two-dimensional dispersive shock waves, Dubrovin et al. \cite{Dubrovin-Grava-Klein-2016} made another conjecture that, the asymptotic description of solutions to the generalized Kadomtsev-Petviashvili (KP) equations can also be given in terms of the real pole-free solution of $\textrm{P}_{\textrm I}^2$. This solution is also found to describe certain critical behavior in nonlinear waves; see \cite{Mas:Rai:Ant2015}. It is interesting to note that, if $u(x,t)$ is a solution of $\textrm{P}_{\textrm I}^2$, it is a solution of the KdV equation as well. Then, in a different context, the particular pole-free solution $u(x,t)$ is referred as the Gurevich-Pitaevskii solution of the KdV equation; cf. \cite{Suleimanov1994}.

The real pole-free solution $u(x,t)$ of $\textrm{P}_{\textrm I}^2$ also appears in random matrix theory and orthogonal polynomials. It is well-known that, local eigenvalue statistics of various random matrix ensembles exhibit universality properties when the matrix size $n$ tends to infinity; see \cite{Kuijlaars-Survey} and references therein. For the random unitary ensembles, the limiting eigenvalue correlation kernels are expressed in terms of the sine function and the Airy function, in the bulk or at the soft edge of the spectrum, respectively. If the limiting mean eigenvalue density vanishes like a power 5/2 near a singular edge point, the local eigenvalues correlation kernel is given in terms of functions associated with the real pole-free solutions of $\textrm{P}_{\textrm I}^2$; see Claeys and Vanlessen \cite{Claeys-Vanlessen-2007-2}. Moreover, the real pole-free solution $u(x,t)$ appears explicitly in the second term of the asymptotic expansions for recurrence coefficients of the associated orthogonal polynomials; cf. \cite{Claeys-Vanlessen-2007-2}.

Due to the importance of the real pole-free solution, there has been considerable interest in studying its properties. For example, besides the existence, it is also shown in \cite{Claeys-Vanlessen-2007} that $u(x,t)$ satisfies the following asymptotic behavior:
\begin{equation} \label{eq-p1-t-BC}
u(x,t)=\frac{1}{2} z_0 |x|^{\frac{1}{3}} + O(|x|^{-2}), \qquad \text{as }  x\to\pm\infty,
\end{equation}
for fixed $t$, where $z_0$ is the real solution of 
$$
z_0^3 = -48 \, \textrm{sgn}(x) + 24 z_0 |x|^{-2/3} t.
$$ 
In \cite{Claeys-2010}, Claeys extended the above asymptotic results by considering $u(x,t)$ when $x$ and $t$ tend to infinity simultaneously. Depending on the precise scaling of $x$ and $t$, both algebraic and elliptic asymptotics were derived. Recently, Grava et al. \cite{Grava-Kapaev-Klein-2015} studied properties of $u(x,t)$ in the complex-$x$ plane. They showed that, $u(x,t)$ is not only real and regular on the real line, but also has an extension to the complex plane with uniform algebraic asymptotics in large sectors. As $u(x,t)$ shares some similar features as the well-known \emph{tritronqu\'{e}e solution} to the Painlev\'e I equation \eqref{eq-PI-def}, they call 
$u(x,t)$ the tritronqu\'{e}e solution of $\textrm{P}_{\textrm I}^2$ and conjectured it to be pole-free in the large sectors in the complex-$x$ plane; see the precise statement of the conjecture in \cite[Conjecture 1.1]{Grava-Kapaev-Klein-2015}. For readers who are interested in other members in the Painlev\'{e} I hierarchy, we refer to \cite{Claeys-2012,Dai-Zhang-2010}.

In this paper, we intend to deepen our understanding about the tritronqu\'{e}e solution $u(x,t)$ of the $\textrm{P}_{\textrm I}^2$ equation by further investigating its asymptotics, as well as the associated Hamiltonians $H_1(x,t)$ and $H_2(x,t)$. These two Hamiltonians are defined explicitly as follows:
\begin{align}
H_{1}(x,t)&=xu+\frac{1}{24}u^{4}-\frac{1}{2}tu^2+\frac{1}{24}uu_{x}^2+\frac{1}{240}u_{x}u_{xxx}-\frac{1}{480}u_{xx}^{2},  \label{eq-represent-Hamiltonian-1} \\
H_{2}(x,t)&=\frac{1}{1920}u_{xxx}^{2}+\frac{1}{80}uu_{x}u_{xxx}+\frac{1}{16}u^2u_{x}^{2}+\frac{1}{10}u^{5}+\frac{1}{24}u^{3}u_{xx}+\frac{1}{240}uu_{xx}^2 \nonumber \\
&\quad -\frac{1}{480}u_{x}^2u_{xx}-\frac{1}{4}u_{x}+\frac{3}{2}xu^{2}+\frac{1}{4}xu_{xx}-tu^{3}-\frac{1}{4}tuu_{xx}+\frac{1}{8}tu_{x}^{2}. \label{eq-represent-Hamiltonian-2}
\end{align}
Note that, the $\textrm{P}_{\textrm I}^2$ equation is the compatibility conditions of the following Hamiltonian system in two time variables (cf. \cite{Grava-Kapaev-Klein-2015,Suleimanov2014}),
\begin{align*}
\frac{d q_j}{dx} = \frac{\partial \mathcal{H}_1}{\partial p_j}, \qquad \frac{d p_j}{dx} = -\frac{\partial \mathcal{H}_1}{\partial q_j}, \quad j = 1,2, \\
\frac{d q_j}{dt} = \frac{\partial \mathcal{H}_2}{\partial p_j}, \qquad \frac{d p_j}{dt} = -\frac{\partial \mathcal{H}_2}{\partial q_j}, \quad j = 1,2, 
\end{align*}
with $\mathcal{H}_{1}=H_{1}, \mathcal{H}_{2}=-\frac{H_{2}}{3}$, $q_1 = u,$ $p_1 = \frac{1}{240} (u_{xxx} + 8 u u_x)$, $q_2 = \frac{1}{240}(u_{xx} + 6u^2)$ and $p_2 = u_x$.  We will build a full asymptotic expansion of $u(x,t)$, uniformly for $t|x|^{-\frac{2}{3}}\in(-\infty,M]$ with a constant $M\in(0,2^{-\frac{2}{3}}3^{-\frac{1}{3}})$. This expansion can be utilized to obtain the total integrals of $u(x,t)$ and $H_1(x,t)$ for $x$ on the real line. The investigation of the total integrations of  $u(x,t)$ and $H_1(x,t)$ is partially motivated by eigenvalue spacing problems in random matrix theory. It has been realized that, total integrals of several Painlev\'e transcendents or associated Hamiltonians are related to large gap asymptotics in certain random matrix models. Take the well-known Tracy-Widom distribution as a concrete example, which is explicitly expressed as 
\begin{equation} \label{eq:TW-PII}
  F_{\textrm{TW}}(x) = \exp \left( - \frac{1}{2} \int_x^{+\infty} (y-x) (q_{\HM}(y) )^2dy \right), \qquad x \in \mathbb{R},
\end{equation}
where $q_{\HM}(y)$ is the  Hastings-McLeod solution of the Painlev\'e II equation; see \cite{Tracy-Widom-1994}. The above formula can be rewritten as
\begin{equation} \label{eq:TW-PII-Ham}
  F_{\textrm{TW}}(x) = \exp \left( - \int_x^{+\infty} H(y) dy \right),
\end{equation}
where $
H(x)=-2^{1/3}H_{\textrm{PII}}(-2^{1/3}x)
$
with $H_{\textrm{PII}}$ being the Hamiltonian for the Painlev\'e II equation; see Forrester and  Witte \cite{FW01}, Tracy and Widom  \cite{Tracy-Widom-1994}. Therefore, the large gap asymptotics of  $ F_{\textrm{TW}}(x) $, including the constant term, can be achieved by deriving the total integrals of the Hastings-McLeod solution $q_{\HM}(y)$ or the associated Hamiltonian; see Baik et al. \cite{Baik-Its-2009}. This idea has been successfully applied to study large gap asymptotics for other matrix models; for example, see \cite{Bother:Buck2018,Bother:Its:Prok2019,Dai-Xu-Zhang-2020}.

The rest of this paper is organized as follows. In Section \ref{sec:main-results}, we state main results for the full asymptotic expansion of $u(x,t)$ in Theorem \ref{Thm-asym-u} and the total integrals of $u(x,t)$ and $H_{1}(x,t)$ in Theorem \ref{Thm-total-integral}. They are obtained via a steepest descent method for the associated Riemann-Hilbert (RH) problems. The detailed analysis is conducted in Section \ref{sec:RH-analysis}. Last, we prove  Theorem \ref{Thm-asym-u} and Theorem \ref{Thm-total-integral} in Section \ref{sec-proof-result}.

\section{Main results} \label{sec:main-results}

\subsection{Asymptotics of $u(x,t)$ and the associated Hamiltonians}
We first obtain a full asymptotic expansion for the tritronqu\'{e}e solution $u(x,t)$ of $\textrm{P}_{\textrm I}^2$ as $x\to\pm\infty$ below.

\begin{theorem}\label{Thm-asym-u}
Let $\mu=t|x|^{-\frac{2}{3}}$, and $z_{\pm}:=z_{\pm}(\mu)$ be the real root of 
\begin{equation} \label{zpm-def}
z_{\pm}^3-24\mu z_{\pm}\pm 48=0.
\end{equation}
Then, for any fixed $M\in(0,2^{-\frac{2}{3}}3^{-\frac{1}{3}})$, we have
\begin{equation}\label{eq-asym-u}
u(x,t)=\frac{z_{\pm}}{2}|x|^{\frac{1}{3}}+E(x,\mu) \quad \text{with}\quad E(x,\mu)\sim|x|^{\frac{1}{3}}\sum\limits_{k=1}^{\infty}\frac{e_{k}^{\pm}(\mu)}{|x|^{\frac{7k}{3}}},
\end{equation}
as $x\to\pm\infty$, where $e_{k}^{\pm}(\mu), k=1,2,\cdots$, are bounded uniformly for $\mu\in(-\infty,M]$. Moreover, for arbitrary  $k=1,2,\cdots$, we have 
\begin{equation} \label{eq:ek-large-mu}
e_{k}^{\pm}(\mu)= \mathcal{O}(\mu^{-2}) \qquad \textit{ as } \mu\to-\infty.
\end{equation}
\end{theorem}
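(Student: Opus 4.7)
\medskip

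\noindent\textbf{Proof proposal.} The plan is to carry out a Deift–Zhou steepest-descent analysis of the Riemann–Hilbert (RH) problem for the tritronqu\'ee solution $u(x,t)$ of $\textrm{P}_{\textrm I}^2$, uniformly in the parameter $\mu = t|x|^{-2/3}\in(-\infty,M]$, and then extract $u(x,t)$ from the recovery formula to read off the full expansion. Throughout, the superscript $\pm$ corresponds to $x\to\pm\infty$.

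First I would start from the $2\times 2$ RH problem with Stokes structure associated with the $\textrm{P}_{\textrm I}^2$ Lax pair (as set up, e.g., in \cite{Claeys-Vanlessen-2007,Grava-Kapaev-Klein-2015}). Rescale the spectral variable by $\lambda = |x|^{1/3}\zeta$, so that the exponential phase in the jumps becomes $|x|^{7/6}\,\theta(\zeta;\mu)$ for an explicit cubic–type function $\theta$. The large parameter is $|x|^{7/6}$, which explains the $|x|^{-7/3}$ step in the expansion \eqref{eq-asym-u}: each order in the small-norm analysis contributes a factor $|x|^{-7/6}$, and only even orders survive in the diagonal entries by a $\mathbb{Z}_2$ symmetry of the problem.

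Next, I would construct a $g$-function $g(\zeta;\mu)$ with a single real saddle point at $\zeta = z_\pm/2$, where $z_\pm$ is precisely the real root of \eqref{zpm-def}; the cubic equation \eqref{zpm-def} arises as the critical-point equation $\theta'(\zeta;\mu)=0$ with the normalization chosen so that the modified phase $\theta-g$ has the correct sign structure along the steepest-descent contours. The restriction $\mu<M<2^{-2/3}3^{-1/3}$ is precisely the condition that \eqref{zpm-def} has a simple real root which stays isolated from the other (complex conjugate) roots; at $\mu=2^{-2/3}3^{-1/3}$ the roots collide and the analysis transitions into the elliptic regime of \cite{Claeys-2010}. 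After the usual undressing transformations (opening lenses along the steepest-descent contours) and constructing a global parametrix from elementary functions plus Airy parametrices at the saddle $\zeta=z_\pm/2$, the residual RH problem is of small-norm type with jump $I+\mathcal{O}(|x|^{-7/6})$, uniform in $\mu\in(-\infty,M]$. Iterating the small-norm expansion produces a series in $|x|^{-7/3}$ whose coefficients, after plugging into the reconstruction formula $u(x,t) = -2\lim_{\lambda\to\infty}\lambda\,\Psi_{12}(\lambda)$ (up to the appropriate rescaling), give the $e_k^\pm(\mu)$.

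The main obstacle is uniformity, especially as $\mu\to-\infty$. In that limit $z_\pm\to 0$ at rate $\pm 2/\mu$, so the saddle migrates toward the origin while the phase function itself becomes large; one must re-balance the $g$-function and check that the Airy parametrix construction and the lens contours can be deformed consistently. I would handle this by rescaling $\zeta = (-\mu)^{1/2}\eta$ when $|\mu|$ is large, reducing the analysis to a perturbation of a clean model problem, and then translating the bounds back. Tracking the $\mu$-dependence in each term of the small-norm series, the prefactor $z_\pm\sim \mp 2/\mu$ appearing in every nontrivial contribution (through the saddle location and through $\theta''(z_\pm/2;\mu)$) forces each $e_k^\pm(\mu)$ to acquire at least two powers of $\mu^{-1}$, yielding \eqref{eq:ek-large-mu}. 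The proof is completed by collecting the expansion and verifying that the implied constants in the asymptotic series are independent of $\mu$ on $(-\infty,M]$.
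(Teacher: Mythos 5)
Your high-level strategy---Deift--Zhou steepest descent of the $\textrm{P}_{\textrm I}^2$ RH problem, rescaling $\lambda = |x|^{1/3}\zeta$, constructing a $g$-function and an Airy local parametrix, then iterating a small-norm residual problem---is indeed what the paper does, and your observation that the $|x|^{-7/3}$ step comes from a parity (diagonal/anti-diagonal) structure in the small-norm iterates, with large parameter $|x|^{7/6}$, is essentially correct. However, there are several concrete errors in how you set up the $g$-function, and these propagate into the rest of the argument.

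First, the $g$-function in the paper does not have a saddle at $\zeta = z_\pm/2$; it has a square-root branch point at $\zeta = z_\pm$ itself, with
\[
g(\zeta) = c_1(\zeta - z_\pm)^{7/2} + c_2(\zeta - z_\pm)^{5/2} + c_3(\zeta - z_\pm)^{3/2},
\]
and the Airy parametrix is installed in a neighborhood of $z_\pm$. The quantity $z_\pm/2$ only appears downstream, as the coefficient of the leading term $u \sim \frac{z_\pm}{2}|x|^{1/3}$, not as a geometric point in the $\zeta$-plane.

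Second, the cubic \eqref{zpm-def} does not arise as $\theta'(\zeta) = 0$. With $\theta(\zeta) = \frac{1}{105}\zeta^{7/2} - \frac{\mu}{3}\zeta^{3/2} \pm \zeta^{1/2}$, the equation $\theta'(\zeta) = 0$ yields $\zeta^3 - 15\mu\zeta + 15 = 0$ (for the $+$ branch), which is a different cubic. Instead, \eqref{zpm-def} is the compatibility condition that the three free constants $c_1, c_2, c_3$ together with $z_\pm$ match all four leading orders ($\zeta^{7/2}, \zeta^{5/2}, \zeta^{3/2}, \zeta^{1/2}$) of $\theta$ at infinity, up to an $O(\zeta^{-1/2})$ error; matching the lowest order $\zeta^{1/2}$ produces exactly $z_\pm^3 - 24\mu z_\pm \pm 48 = 0$.

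Third, the bound $M < 2^{-2/3}3^{-1/3}$ is not the discriminant threshold of \eqref{zpm-def}. The discriminant of $z^3 - 24\mu z \pm 48$ vanishes at $\mu = 3^{2/3}/2 \approx 1.04$, whereas $2^{-2/3}3^{-1/3} \approx 0.437$. What the paper actually needs is the condition $c_2^2 - 4c_1 c_3 < 0$ (equivalently $\mu < \tfrac{3}{80}z_\pm^2$, i.e.\ $z_\pm^2 - 8\mu$ bounded away from zero), which guarantees that $z_\pm$ is the only real zero of $g$ and hence that $\re g$ has the correct sign on the lens boundaries. That is a strictly stronger requirement than simplicity of the real root, and it is this quadratic-in-$(\zeta - z_\pm)$ discriminant, not the cubic discriminant, that fails first and triggers the elliptic regime.

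Finally, for the decay $e_k^\pm(\mu) = O(\mu^{-2})$ you propose a rescaling $\zeta = (-\mu)^{1/2}\eta$; the paper avoids this entirely by an algebraic bookkeeping: it shows that every coefficient $R_k^{(m)}$ of the small-norm iterates is a polynomial in $c_3^{-1}$ with $c_3 = \tfrac{1}{24}(z_\pm^2 - 8\mu) \sim -\mu/3$, and that the contributions to $e_k^\pm$ have degree at least $2$ in $c_3^{-1}$ (already visible in the explicit formula $e_1^\pm = -\tfrac{64}{3}(z_\pm^2 - 8\mu)^{-3} + \tfrac{256 z_\pm^2}{3}(z_\pm^2 - 8\mu)^{-4}$). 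This is cleaner and is uniform across all $\mu \in (-\infty, M]$, not just asymptotically. Your rescaling might be made to work, but as written the argument leans on the misidentified branch point at $z_\pm/2$ and on $\theta''$ there, neither of which is what enters the estimates.
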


The leading asymptotics of $u(x,t)$ as $x\to\pm\infty$ have been derived in \cite{Claeys-Vanlessen-2007, Grava-Kapaev-Klein-2015} for fixed $t$. When $t\to\pm\infty$ with $x=s|t|^{\frac{3}{2}}$, the leading asymptotics of $u(x,t)$ is provided in \cite{Claeys-2010} for $s$ in different intervals. A full asymptotic expansion of $u(x,t)$ is also given in \cite{Suleimanov} as follows:
\begin{equation}\label{eq-u-full-asym-fix-t}
u(x,t)=-6^{\frac{1}{3}}x^{\frac{1}{3}}-2\cdot 6^{-\frac{1}{3}} tx^{-\frac{1}{3}}+\sum\limits_{j=2}^{\infty}\frac{P_{j}(t)}{x^{\frac{j}{3}}} \qquad \textit{ as } x\to\pm\infty,
\end{equation}   
with $t$ being bounded. Note that, in the above formula and afterwards, $x^{\frac{1}{3}}=-|x|^{\frac{1}{3}}$ when $x<0$. Our expansion \eqref{eq-asym-u} is superior to the above one in the sense that it holds in a larger region of $t$ and is expressed as a power series of $|x|^{-\frac{7}{3}}$, rather than $x^{-\frac{1}{3}}$.

\begin{remark} \label{remark-1}
The coefficients $e_{k}^{\pm}(\mu)$ in \eqref{eq-asym-u} can be explicitly constructed term by term; see the derivation in Section \ref{sec-proof-result}. More precisely, we show that $e^{\pm}_k(\mu)$ are actually polynomials in terms of $(z_\pm^2 - 8 \mu)^{-1}$; for example,  
\begin{equation}
e_{1}^{\pm}(\mu)=-\frac{64}{3}(z_{\pm}^2-8\mu)^{-3}+\frac{256z_{\pm}^2}{3}(z_{\pm}^2-8\mu)^{-4}.
\end{equation} 
By choosing $M\in(0,2^{-\frac{2}{3}}3^{-\frac{1}{3}})$, one can ensure that $z_\pm^2 - 8 \mu$ is bounded away from 0 for all $\mu \in (-\infty, M]$. This  implies that $e_{k}^{\pm}(\mu), k=1,2,\cdots$, are analytic with respect to $\mu$ in the neighborhood of $(-\infty,M]$, and 
\begin{equation}\label{eq-error-bouond-E}
E(x,\mu)=\mathcal{O}((z_{\pm}^2-8\mu)^{-2}|x|^{-2}) \qquad \textit{ as } x\to\pm\infty.
\end{equation} 
Differentiating both side of \eqref{eq-asym-u} with respect to $x$ (keeping in mind that both $z_{\pm}$ and $\mu$ depend on $x$), one gets asymptotics for derivatives of $u(x,t)$ as follows:
\begin{eqnarray}
u_{x}(x,t)&=&\frac{-8}{z_{\pm}^2-8\mu}|x|^{-\frac{2}{3}}+\mathcal{O}((z_{\pm}^2-8\mu)^{-2}|x|^{-3}),  \label{eq-asym-ux-uniform} \\
u_{xx}(x,t)&=&\mathcal{O}((z_{\pm}^2-8\mu)^{-2}|x|^{-\frac{5}{3}}),\\
u_{xxx}(x,t)&=&\mathcal{O}((z_{\pm}^2-8\mu)^{-2}|x|^{-\frac{8}{3}}),
\end{eqnarray}
as $x\to\pm\infty$ uniformly for all $\mu\in(-\infty,M]$.
\end{remark}

Substituting \eqref{eq-asym-u} and \eqref{eq-asym-ux-uniform} into \eqref{eq-represent-Hamiltonian-1}, we obtain the asymptotics of the Hamiltonians.
\begin{corollary}\label{cor-asym-Hamiltonians}
Let $z_{\pm}$ be given in \eqref{zpm-def}, and the Hamiltonians $H_{1}(x,t), H_{2}(x,t)$ defined in \eqref{eq-represent-Hamiltonian-1} and \eqref{eq-represent-Hamiltonian-2}, respectively, then we have, as $x\to\pm\infty$,
\begin{align}
H_{1}(x,t)=&\left(\pm\frac{1}{4}z_{\pm}-\frac{z_{\pm}^4}{384}\right)|x|^{\frac{4}{3}}+\frac{4z_{\pm}}{3}\left(\frac{1}{z_{\pm}^2-8\mu}\right)^2|x|^{-1}+\mathcal{O}(|x|^{-\frac{10}{3}}), \label{eq-asym-Hamiltonians-uniformly-t-1} \\
H_{2}(x,t)=&\left(\frac{z_{\pm}^5}{320}\pm\frac{3z_{\pm}^2}{8}-\frac{\mu z_{\pm}^{3}}{8}\right)|x|^{\frac{5}{3}}+\frac{3z_{\pm}^2-8\mu}{(z_{\pm}^2-8\mu)^2}|x|^{-\frac{2}{3}}+\mathcal{O}(|x|^{-3}), \label{eq-asym-Hamiltonians-uniformly-t-2}
\end{align}
uniformly for all $\mu\in(-\infty,M]$. 
\end{corollary}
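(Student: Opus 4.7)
The plan is direct substitution. Theorem~\ref{Thm-asym-u} together with Remark~\ref{remark-1} supplies full asymptotic expansions of $u(x,t)$ and its derivatives $u_x, u_{xx}, u_{xxx}$ as $x\to\pm\infty$ that are uniform in $\mu\in(-\infty,M]$. Since $H_1$ and $H_2$ are fixed polynomial expressions in these quantities with coefficients in $x$ and $t$, the corollary is obtained by substituting, organizing by powers of $|x|$, and repeatedly reducing algebraic combinations modulo the cubic relation \eqref{zpm-def}. No new analytic input is required.

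For $H_{1}$ the calculation proceeds in three steps. At order $|x|^{4/3}$ only the algebraic terms $xu$, $\frac{1}{24}u^{4}$, $-\frac{1}{2}tu^{2}$ contribute; substituting $x=\pm|x|$, $t=\mu|x|^{2/3}$, and $u\sim\frac{z_{\pm}}{2}|x|^{1/3}$, one obtains the bracket $\pm\frac{z_{\pm}}{2}+\frac{z_{\pm}^{4}}{384}-\frac{\mu z_{\pm}^{2}}{8}$. Multiplying out the identity $z_{\pm}(z_{\pm}^{3}-24\mu z_{\pm}\pm 48)=0$ from \eqref{zpm-def} shows this equals $\pm\frac{z_{\pm}}{4}-\frac{z_{\pm}^{4}}{384}$, as claimed. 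At order $|x|^{-1}$, the $E$-corrections produced by $xu$, $\frac{1}{24}u^{4}$, $-\frac{1}{2}tu^{2}$ sum to $E\cdot|x|\cdot\frac{1}{48}(z_{\pm}^{3}-24\mu z_{\pm}\pm 48)=0$, again by \eqref{zpm-def}, and the only surviving contribution at this order comes from $\frac{1}{24}uu_{x}^{2}$, which by \eqref{eq-asym-ux-uniform} equals $\frac{4z_{\pm}}{3(z_{\pm}^{2}-8\mu)^{2}}|x|^{-1}$. Finally, $\frac{1}{240}u_{x}u_{xxx}$ and $-\frac{1}{480}u_{xx}^{2}$ are already $\mathcal{O}(|x|^{-10/3})$ by the derivative bounds in Remark~\ref{remark-1}, and the residual error from $E$ in the remaining terms is of the same order, giving \eqref{eq-asym-Hamiltonians-uniformly-t-1}.

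For $H_{2}$ the same strategy applies but the bookkeeping is substantially heavier. As a preliminary one needs the explicit leading asymptotics of $u_{xx}$ and $u_{xxx}$ (not just the order estimates in Remark~\ref{remark-1}), which come from differentiating \eqref{eq-asym-u} two or three times in $x$, using $\mu'(x)=-\frac{2\mu}{3x}$ and the implicit derivative $z_{\pm}'(\mu)=\frac{8z_{\pm}}{z_{\pm}^{2}-8\mu}$ obtained from \eqref{zpm-def}. At the top order $|x|^{5/3}$ only $\frac{1}{10}u^{5}$, $\frac{3}{2}xu^{2}$, $-tu^{3}$ contribute, and the cubic relation collapses their combination to $\frac{z_{\pm}^{5}}{320}\pm\frac{3z_{\pm}^{2}}{8}-\frac{\mu z_{\pm}^{3}}{8}$. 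At the next surviving order $|x|^{-2/3}$, one simultaneously combines the contributions of $-\frac{1}{4}u_{x}$, $\frac{1}{16}u^{2}u_{x}^{2}$, $\frac{1}{24}u^{3}u_{xx}$, $\frac{1}{4}xu_{xx}$, $-\frac{1}{4}tuu_{xx}$, $\frac{1}{8}tu_{x}^{2}$, together with the $E$-corrections to the algebraic $u$-monomials (which cancel by \eqref{zpm-def} exactly as in the $H_{1}$ step); after simplification one obtains $\frac{3z_{\pm}^{2}-8\mu}{(z_{\pm}^{2}-8\mu)^{2}}|x|^{-2/3}$.

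The main obstacle is purely algebraic: $H_{2}$ assembles contributions from many different monomials at each relevant order, so one must expand each to the correct precision, track the leading asymptotics of $u_{xx}$ and $u_{xxx}$, and reduce the total sum modulo \eqref{zpm-def}. There is no conceptual difficulty once this bookkeeping is organized, and the cancellations of all $E$-corrections in both $H_{1}$ and $H_{2}$ are guaranteed by the single cubic identity $z_{\pm}^{3}-24\mu z_{\pm}\pm 48=0$.
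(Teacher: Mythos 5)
Your proposal is correct and follows the paper's proof exactly: the paper also obtains the corollary by directly substituting the expansion \eqref{eq-asym-u} (together with \eqref{eq-asym-ux-uniform} and the derivative estimates from Remark~\ref{remark-1}) into \eqref{eq-represent-Hamiltonian-1} and \eqref{eq-represent-Hamiltonian-2} and organizing by powers of $|x|$, with the cubic relation \eqref{zpm-def} handling the cancellations. The only slight over-statement in your plan is that the explicit leading term of $u_{xxx}$ is not actually needed for $H_2$: the monomial $\frac{1}{80}uu_xu_{xxx}$ is already $\mathcal{O}(|x|^{-3})$, which is inside the stated error, so the order estimate from Remark~\ref{remark-1} suffices there and only the explicit leading term of $u_{xx}$ is required.
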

\begin{remark}\label{rem-error-H}
By a similar argument as in Remark \ref{remark-1}, the error terms in \eqref{eq-asym-Hamiltonians-uniformly-t-1} and \eqref{eq-asym-Hamiltonians-uniformly-t-2} are also analytic with respect to $\mu$ and  can be rewritten as $\mathcal{O}((z_{\pm}^2-8\mu)^{-2}|x|^{-\frac{10}{3}}), \mathcal{O}((z_{\pm}^2-8\mu)^{-2}|x|^{-3})$ respectively. As the error estimation is valid for $\mu \in (-\infty, M]$, {\it i.e.} $t \in (-\infty, M |x|^{\frac{2}{3}}]$, we can get a well-controlled error after integrating the expansion \eqref{eq-asym-Hamiltonians-uniformly-t-1}  with respect to $t$ from $- \infty$ to any positive fixed constant. This property will play an important role in our derivation for the total integral of $H_1(x,t)$ below.
\end{remark}

\begin{remark}
Since the above asymptotic results hold uniformly for all $\mu\in(-\infty,M]$, they are valid for any fixed $t$. Indeed, when $t$ is fixed, then $\mu\to 0$ as $x\to\pm\infty$. From the definition of $z_{\pm}$ in \eqref{zpm-def}, we find that 
\begin{equation}\label{eq-asymp-z-pm}
z_{+}(\mu)=-z_{-}(\mu)=-2\cdot 6^{\frac{1}{3}}-4\cdot 6^{-\frac{1}{3}}\mu+\frac{4\cdot 6^{\frac{1}{3}}}{27}\mu^{3}+\frac{4\cdot 6^{\frac{2}{3}}}{81}\mu^{4}+\mathcal{O}(\mu^{5})
\end{equation}
as $\mu\to 0$. Substituting this formula into \eqref{eq-asym-u} and \eqref{eq-asym-Hamiltonians-uniformly-t-1} respectively, we immediately obtain
\begin{equation}\label{eq-asym-u-fixed-t}
u(x,t)=-6^{\frac{1}{3}}x^{\frac{1}{3}}-2\cdot 6^{-\frac{1}{3}}tx^{-\frac{1}{3}}+\frac{2\cdot 6^{\frac{1}{3}}}{27}t^{3}x^{-\frac{5}{3}}+\frac{1}{36 x^2}+\mathcal{O}(x^{-\frac{7}{3}})
\end{equation}
and
\begin{equation}\label{eq-asym-Hamiltonians-fixed-t}
H_{1}(x,t)=-\frac{3}{4}6^{\frac{1}{3}}x^{\frac{4}{3}}-3\cdot 6^{-\frac{1}{3}}tx^{\frac{2}{3}}- t^2-\frac{6^{\frac{1}{3}}}{9}t^3 x^{-\frac{2}{3}}-\frac{1}{36x}+\mathcal{O}(x^{-\frac{4}{3}})
\end{equation}
as $x\to\pm\infty$ for any fixed $t$. 
\end{remark}

\subsection{Total integrals of $u(x,t)$ and $H_1(x,t)$}

It has been shown in \cite{Claeys-Vanlessen-2007} that $u(x,t)$ is pole-free on the real-$x$ axis for any $t \in \mathbb{R}$. From its definition in \eqref{eq-represent-Hamiltonian-1}, it is easy to see that $H_1(x,t)$ is pole-free for real $x$ as well. This makes it possible for us to compute their total integrals for $x \in \mathbb{R}$. There are two matters we need to consider. First, according to the asymptotic expansions in \eqref{eq-asym-u} and \eqref{eq-asym-Hamiltonians-uniformly-t-1}, both functions $u(x,t)$ and $H(x,t)$ do not decay sufficiently fast as $x \to \pm \infty$. Then, certain terms need to be deducted to make the integrals convergent. Second, as we don't have enough information about $u(x,t)$ and $H(x,t)$ when $x$ is finite, the asymptotics in \eqref{eq-asym-u} and \eqref{eq-asym-Hamiltonians-uniformly-t-1} only are not enough for us to establish the desired total integrals. Some differential identities are required to overcome this obstacle; for example, see some similar ideas in \cite{Baik-Its-2009,Dai-Xu-Zhang-2020, Kokocki-2020}.

For the problem we are addressing, the first differential identity we need is 
\begin{align}
\frac{\partial H_{1}(x,t)}{\partial x}&=u(x,t), \label{eq-differential-equality-1}
\end{align}
which is known in the literature. Next, keeping in mind that $u(x,t)$ also satisfies the KdV equation \eqref{eq-kdv}, a straightforward computation gives us an additional differential identity
\begin{align}
\frac{\partial H_{1}(x,t)}{\partial t}&=-\frac{1}{3}\frac{\partial H_{2}(x,t)}{\partial x}-\frac{1}{12}u_{xx}(x,t) = -\frac{1}{2} u(x,t)^2-\frac{1}{12}u_{xx}(x,t) ; \label{eq-differential-equality-2}
\end{align}
see the explicit expression of $H_1(x,t)$ and $H_2(x,t)$ in \eqref{eq-represent-Hamiltonian-1} and \eqref{eq-represent-Hamiltonian-2}. As the asymptotics of $H_{1}(x,t)$ is given in \eqref{eq-asym-Hamiltonians-fixed-t}, it is easy for us to obtain the total integral of $u(x,t)$ with the aid of \eqref{eq-differential-equality-1}. Derivation of the total integral of $H_{1}(x,t)$ is more involving. Let us denote
\begin{equation} \label{eq-def-I(t)}
I(t)=\int_{X_{1}}^{X_{2}}H_{1}(x,t)dx
\end{equation}
with two arbitrary  constants $X_{k}$, $k=1,2$. Then, with the second differential identity \eqref{eq-differential-equality-2}, we have
\[ \frac{d I(t)}{dt}  = \int_{X_{1}}^{X_{2}} \frac{\partial H_{1}(x,t)}{\partial t} dx =  \int_{X_{1}}^{X_{2}} \left[ -\frac{1}{3}\frac{\partial H_{2}(x,t)}{\partial x}-\frac{1}{12}u_{xx}(x,t) \right] dx.\]
Integrating both sides of the above formula from $-\infty$ to $t$, we get 
\begin{equation}\label{eq-integral-with-t}
I(t)=I(-\infty)+\int_{-\infty}^{t}\left(\frac{1}{3}H_{2}(X_{1}, \tau)+\frac{1}{12}u_{x}(X_{1},\tau)-\frac{1}{3}H_{2}(X_{2},\tau)-\frac{1}{12}u_{x}(X_{2},\tau)\right)d\tau.
\end{equation}
In such a way, we transform an integral of $H_1(x,t)$ with respect to $x$ into a new one with respect to $t$. With delicate uniform asymptotics for both $u(x,t)$ and $H_2(x,t)$ as $x \to \pm \infty$, we successfully establish the total integral of $H_{1}(x,t)$. 

\begin{remark}
To get a total integral of $H_{1}(x,t)$, we will let $X_1 \to -\infty$ and $X_2 \to +\infty$.  By \eqref{eq-integral-with-t}, this requires the asymptotics for both $u_{x}(x,\tau)$ and $H_{2}(x,\tau)$ as $x\to\pm\infty$ hold uniformly for all $\tau\in(-\infty,t]$. It is one of the motivations why we first establish the corresponding asymptotic expansion of $u(x,t)$ in Theorem \ref{Thm-asym-u}.
\end{remark}

We are now in position to state our second main results about the total integrals of $u(x,t)$ and the associated Hamiltonian $H_{1}(x,t)$. 

\begin{theorem}\label{Thm-total-integral}
Let $u(x,t)$ be the tritronqu\'{e}e solution of the $\textrm{P}_{\textrm I}^2$ equation \eqref{eq-PI-hierarchy} and $H_{1}(x,t)$ be the associated Hamiltonian given in \eqref{eq-represent-Hamiltonian-1}. Then, for any fixed $t\in\mathbb{R}$, we have
\begin{equation}\label{eq-total-integral-u}
\int_{-\infty}^{+\infty}\left(u(x,t)+6^{\frac{1}{3}}x^{\frac{1}{3}}+2\cdot 6^{-\frac{1}{3}} t x^{-\frac{1}{3}}\right)dx=0
\end{equation}
and
\begin{equation}\label{eq-total-integral-H1}
\int_{-\infty}^{+\infty}\left(H_{1}(x,t)+\frac{3}{4}6^{\frac{1}{3}}x^{\frac{4}{3}}+3\cdot 6^{-\frac{1}{3}}tx^{\frac{2}{3}}+t^2+\frac{6^{\frac{1}{3}}t^3}{9}x^{-\frac{2}{3}}+\frac{x}{36(x^2+1)}\right)=0.
\end{equation}
\end{theorem}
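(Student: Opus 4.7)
The identity \eqref{eq-total-integral-u} is the easier of the two. My plan is to apply the first differential identity \eqref{eq-differential-equality-1} and write
\[
\int_{X_{1}}^{X_{2}} u(x,t)\,dx \;=\; H_{1}(X_{2},t) - H_{1}(X_{1},t),
\]
observing that the two subtracted terms in \eqref{eq-total-integral-u} are exactly the $x$-derivative of the growth expression $\tfrac{3}{4}6^{1/3}x^{4/3}+3\cdot 6^{-1/3} t x^{2/3}$ that appears in \eqref{eq-asym-Hamiltonians-fixed-t}. Sending $X_{2}\to+\infty$ and $X_{1}\to-\infty$ and using \eqref{eq-asym-Hamiltonians-fixed-t}, the residual boundary values at $X_{1}$ and $X_{2}$ both converge to $-t^{2}$ and cancel, while the $-1/(36x)$ piece vanishes in the limit; this yields \eqref{eq-total-integral-u}.

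For \eqref{eq-total-integral-H1} my plan is to exploit the identity \eqref{eq-integral-with-t}. The first step is to verify that $I(-\infty)=0$ for any fixed $X_{1},X_{2}$: a leading-order balance in \eqref{eq-PI-hierarchy} as $\tau\to-\infty$ with $x$ in a compact set forces $u(x,\tau)\sim x/\tau$, and substitution into \eqref{eq-represent-Hamiltonian-1} gives $H_{1}(x,\tau)=\mathcal{O}(1/|\tau|)$ uniformly for $x\in[X_{1},X_{2}]$; dominated convergence then produces $I(-\infty)=0$, so that
\[
I(t)=\int_{-\infty}^{t}K(X_{1},X_{2};\tau)\,d\tau,\quad K:=\tfrac{1}{3}(H_{2}(X_{1},\tau)-H_{2}(X_{2},\tau))+\tfrac{1}{12}(u_{x}(X_{1},\tau)-u_{x}(X_{2},\tau)).
\]

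The heart of the argument is then to prove, with $R(x,t)$ denoting the parenthesized sum of subtracted terms in \eqref{eq-total-integral-H1},
\[
\lim_{X_{1}\to-\infty,\;X_{2}\to+\infty}\Bigl\{I(t)+\int_{X_{1}}^{X_{2}}R(x,t)\,dx\Bigr\}=0.
\]
To this end I would insert into $K$ the uniform expansions \eqref{eq-asym-Hamiltonians-uniformly-t-2} and \eqref{eq-asym-ux-uniform}, and Taylor-expand $z_{\pm}(\mu)$ using \eqref{eq-asymp-z-pm}, splitting $K$ into a finite sum of elementary monomials in $(\tau,|X_{i}|)$ plus a controlled remainder. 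The monomial part integrates explicitly in $\tau$ after the change of variables $\mu=\tau/|X_{i}|^{2/3}$, and its leading contribution as $|X_{i}|\to\infty$ is precisely cancelled by $\int R\,dx$, computed as an elementary antiderivative of the growth terms of $R$; the matching of coefficients term by term is forced by the defining relation $z_{\pm}^{3}-24\mu z_{\pm}\pm 48=0$ of \eqref{zpm-def}.

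The main technical obstacle is showing that the remainder vanishes after integration in $\tau\in(-\infty,t]$ as $|X_{i}|\to\infty$. Here I would use the uniform error bounds from Remarks \ref{remark-1} and \ref{rem-error-H}, namely $E(x,\mu)=\mathcal{O}((z_{\pm}^{2}-8\mu)^{-2}|x|^{-2})$ and the analogous estimates for the errors in \eqref{eq-asym-Hamiltonians-uniformly-t-2}, combined with $z_{\pm}(\mu)=\pm 2\mu^{-1}+\mathcal{O}(\mu^{-4})$ as $\mu\to-\infty$ (extracted from \eqref{zpm-def}). These yield $\tau$-integrable majorants of the remainder that shrink as $|X_{i}|\to\infty$, so dominated convergence legitimates interchanging $\lim_{|X_{i}|\to\infty}$ with $\int_{-\infty}^{t}$, completing the proof of \eqref{eq-total-integral-H1}.
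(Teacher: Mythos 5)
Your argument for \eqref{eq-total-integral-u} matches the paper: integrate \eqref{eq-differential-equality-1} together with the corrector $6^{1/3}x^{1/3}+2\cdot 6^{-1/3}tx^{-1/3}$ (which is the exact $x$-derivative of $\tfrac34 6^{1/3}x^{4/3}+3\cdot 6^{-1/3}tx^{2/3}$), and read off boundary values from \eqref{eq-asym-Hamiltonians-fixed-t}; the $-t^2$ contributions cancel, as you say. Your strategy for \eqref{eq-total-integral-H1} — start from \eqref{eq-integral-with-t}, prove $I(-\infty)=0$ from $u\sim x/\tau$, then analyze the $\tau$-integrals of $H_2$ and $u_x$ at $x=X_{1,2}$ — is also the same skeleton as the paper.

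The gap is in the execution of the $\tau$-integrals. You propose to Taylor-expand $z_\pm(\mu)$ via \eqref{eq-asymp-z-pm} (an expansion at $\mu=0$), substitute into $K$, and integrate the resulting monomials in $\tau$ over $(-\infty,t]$. This cannot work as stated: with $\mu=\tau|X_i|^{-2/3}$ ranging over all of $(-\infty,t|X_i|^{-2/3}]$, a truncated power series in $\mu$ is not a valid approximation on the tail, and the individual monomials $\mu^n$ (hence $\tau^n$) do not even have convergent integrals over $(-\infty,t]$. Invoking the large-$|\mu|$ asymptotics $z_\pm\sim\pm 2\mu^{-1}$ to ``control the remainder'' does not repair this, because the remainder of a Taylor polynomial at $\mu=0$ is not small when $|\mu|$ is large; the two regimes need to be treated with different approximations, and the transition region has to be handled. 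What the paper actually does is avoid any Taylor expansion of $z_\pm$: it introduces the exact nonlinear change of variables $z=z_\pm(\tau|X_k|^{-2/3})$, uses the identity $\mu=\tfrac{z^2}{24}\pm\tfrac{2}{z}$ from \eqref{zpm-def} to turn the integrand into a rational function of $z$, and integrates in closed form; the expansion \eqref{eq-asymp-z-pm} is used only at the final step, to expand the already-computed antiderivative evaluated at $z_0^\pm$ as $|X_i|\to\infty$.

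You also miss the two regularization/cancellation devices that make the calculation close. First, the $\tau$-integral of $H_2$ needs an explicit $\tfrac{1}{24\tau}$ subtraction on the tail to converge, and this generates a $\log$ term that must be matched against the $\tfrac{x}{36(x^2+1)}$ corrector. Second, and crucially, the cut points $T_1,T_2$ must be chosen so that $T_1|X_1|^{-2/3}=T_2|X_2|^{-2/3}$; combined with the symmetry $z_+(\mu)=-z_-(\mu)$, this forces $\tilde z_1=-\tilde z_2$ and makes the two tail integrals in $I(X_1,t)$ and $I(X_2,t)$ cancel exactly, while the middle pieces differ only by $o(1)$. Without these two mechanisms the boundary contributions from $X_1$ and $X_2$ do not combine into something finite, and ``matching of coefficients forced by \eqref{zpm-def}'' is too vague to replace them.
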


\begin{remark}
Due to the asymptotics of $u(x,t)$ and $H_1(x,t)$ given in \eqref{eq-asym-u-fixed-t} and \eqref{eq-asym-Hamiltonians-fixed-t}, some leading terms are deducted to ensure the convergence of the integral in the above theorem. Moreover, in \eqref{eq-total-integral-H1}, we adopt the term $\frac{x}{36(x^2+1)}$, instead of $\frac{1}{36x}$, to ensure that the integral is convergent at $x=0$. It should also be noted that both $u(x,t)$ and $H_{1}(x,t)$ are functions of $(x,t)$, which implies that their total integrals about $x$ should dependent on $t$ in general. Surprisingly, the right-hand side of \eqref{eq-total-integral-u} and \eqref{eq-total-integral-H1} are both zero, despite the fact that $u(x,t)$ behaves dramatically differently for $t>0$ and $t<0$; see \cite{Claeys-2010} or \cite[Fig. 19]{Grava-Kapaev-Klein-2015}. 
\end{remark}

By differentiating both sides of \eqref{eq-total-integral-H1} with respect to $t$ and taking into account the differential identity in \eqref{eq-differential-equality-2}, we also derive the total integral of $u(x,t)^2$.
\begin{corollary}
For any fixed $t\in\mathbb{R}$, we have 
\begin{equation}
\int_{-\infty}^{+\infty}\left(u(x,t)^2-6^{\frac{2}{3}}x^{\frac{2}{3}}-4t-4\cdot 6^{-\frac{2}{3}}t^2 x^{-\frac{2}{3}}\right)dx=0.
\end{equation}
\end{corollary}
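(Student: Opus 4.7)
The plan is to follow the hint: differentiate the total integral identity \eqref{eq-total-integral-H1} with respect to $t$ and then substitute the differential identity \eqref{eq-differential-equality-2} for $\partial_t H_1$. Concretely, the $t$-derivative of the subtracted algebraic tail in \eqref{eq-total-integral-H1} is
\[
\frac{\partial}{\partial t}\left(\frac{3}{4}6^{\frac{1}{3}}x^{\frac{4}{3}}+3\cdot 6^{-\frac{1}{3}}tx^{\frac{2}{3}}+t^2+\frac{6^{\frac{1}{3}}t^3}{9}x^{-\frac{2}{3}}+\frac{x}{36(x^2+1)}\right)=3\cdot 6^{-\frac{1}{3}}x^{\frac{2}{3}}+2t+\frac{6^{\frac{1}{3}}t^2}{3}x^{-\frac{2}{3}},
\]
and \eqref{eq-differential-equality-2} rewrites $\partial_t H_1(x,t)=-\tfrac{1}{2}u(x,t)^{2}-\tfrac{1}{12}u_{xx}(x,t)$. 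Once the derivative can be moved inside the integral, multiplying by $-2$ gives exactly the claimed identity, provided that (i) $\int_{-\infty}^{+\infty} u_{xx}(x,t)\,dx$ vanishes and (ii) $\tfrac{2}{3}6^{\frac{1}{3}}=4\cdot 6^{-\frac{2}{3}}$ and $6\cdot 6^{-\frac{1}{3}}=6^{\frac{2}{3}}$, which are trivial algebraic checks.

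For step (i), the asymptotic \eqref{eq-asym-ux-uniform} in Remark \ref{remark-1} gives $u_x(x,t)=\mathcal{O}(|x|^{-2/3})$ as $x\to\pm\infty$ for fixed $t$, and in particular $u_x$ has definite limits (zero, after peeling off the algebraic part). Hence $\int_{-N_2}^{N_1} u_{xx}(x,t)\,dx=u_x(N_1,t)-u_x(-N_2,t)\to 0$, so the $u_{xx}$ contribution drops out and only the $u^2$ term survives. Combining with the $t$-derivative of the tail yields \eqref{eq-total-integral-u} (squared version) up to the factor $-2$, producing precisely the stated constants $6^{\frac{2}{3}}$, $4t$, and $4\cdot 6^{-\frac{2}{3}}t^{2}$.

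The main technical point is justifying the interchange of $d/dt$ with the improper integral. I would do this by splitting $\mathbb{R}=[-N,N]\cup(\mathbb{R}\setminus[-N,N])$: on the compact part, pole-freeness of $u(x,t)$ makes the interchange routine, while on the tails one uses the uniform-in-$t$ expansions from Theorem \ref{Thm-asym-u} and Corollary \ref{cor-asym-Hamiltonians} (and Remark \ref{rem-error-H}) to produce an integrable, $t$-locally bounded majorant for $\partial_t H_1(x,t)+3\cdot 6^{-1/3}x^{2/3}+2t+\tfrac{1}{3}6^{1/3}t^2 x^{-2/3}$, so dominated convergence applies. This is the only genuine obstacle; the rest is bookkeeping of the subtracted polynomial/rational tails already carried through the proof of Theorem \ref{Thm-total-integral}.
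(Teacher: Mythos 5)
Your proposal is correct and follows exactly the approach the paper indicates for this corollary: differentiate the total integral of $H_1$ in \eqref{eq-total-integral-H1} with respect to $t$, substitute $\partial_t H_1 = -\tfrac{1}{2}u^2 - \tfrac{1}{12}u_{xx}$ from \eqref{eq-differential-equality-2}, and discard the $u_{xx}$ contribution since $u_x\to 0$ at $\pm\infty$ by \eqref{eq-asym-ux-uniform}. The paper does not spell out the justification of the differentiation under the integral sign, so your dominated-convergence argument via the uniform-in-$t$ tail estimates is a reasonable fleshing-out rather than a different route.
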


Since $\frac{\partial H_{1}(x,t)}{\partial x}=u(x,t)$, an integration by par gives us
\begin{equation}
\int_{X_{1}}^{X_{2}}x \cdot u(x,t)dx=X_{2}H_{1}(X_{2},t)-X_{1}H_{1}(X_{1},t)-\int_{X_{1}}^{X_{2}}H_{1}(x,t)dx.
\end{equation} 
Taking the limits $X_{1}\to-\infty$, $X_{2}\to+\infty$, and making use of the asymptotic of $u(x,t)$ in \eqref{eq-asym-u-fixed-t} and the total integral of $H_{1}(x,t)$ in \eqref{eq-total-integral-H1}, we obtain a total integral for $xu(x,t)$.
\begin{corollary}
For any fixed $t\in\mathbb{R}$, we have
\begin{equation}
\int_{-\infty}^{+\infty}\left(x\cdot u(x,t)+6^{\frac{1}{3}}x^{\frac{4}{3}}+2\cdot 6^{-\frac{1}{3}}t x^{\frac{2}{3}}-\frac{x}{36(x^2+1)}\right)dx=0.
\end{equation}
\end{corollary}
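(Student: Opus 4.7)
My plan is to follow the strategy sketched just before the statement of the corollary. Starting from the integration-by-parts identity
\begin{equation*}
\int_{X_{1}}^{X_{2}} x\, u(x,t)\, dx = X_{2} H_{1}(X_{2},t) - X_{1} H_{1}(X_{1},t) - \int_{X_{1}}^{X_{2}} H_{1}(x,t)\, dx,
\end{equation*}
which is an immediate consequence of $\partial_{x}H_{1}=u$, I would send $X_{1}\to -\infty$ and $X_{2}\to +\infty$, controlling the boundary term by the $H_{1}$-asymptotic \eqref{eq-asym-Hamiltonians-fixed-t} and the remaining integral by Theorem \ref{Thm-total-integral}.

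Concretely, I would split $H_{1}(x,t) = H_{1,\mathrm{reg}}(x,t) - K(x,t)$ with
\begin{equation*}
K(x,t) := \tfrac{3}{4}\cdot 6^{1/3}x^{4/3} + 3\cdot 6^{-1/3}t\, x^{2/3} + t^{2} + \tfrac{6^{1/3}t^{3}}{9}x^{-2/3} + \tfrac{x}{36(x^{2}+1)},
\end{equation*}
so that $\int_{X_{1}}^{X_{2}} H_{1,\mathrm{reg}}(x,t)\, dx \to 0$ by \eqref{eq-total-integral-H1}, while $\int K(x,t)\, dx$ is an explicit combination of the antiderivatives $x^{7/3}$, $x^{5/3}$, $x$, $x^{1/3}$, $\log(x^{2}+1)$. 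For the boundary piece, the expansion \eqref{eq-asym-Hamiltonians-fixed-t} yields
\begin{equation*}
x H_{1}(x,t) = -\tfrac{3}{4}\cdot 6^{1/3}x^{7/3} - 3\cdot 6^{-1/3} t\, x^{5/3} - t^{2}x - \tfrac{6^{1/3}t^{3}}{9}x^{1/3} - \tfrac{1}{36} + o(1), \qquad x\to \pm\infty,
\end{equation*}
and the additive constant $-\tfrac{1}{36}$ drops out automatically from the difference $X_{2}H_{1}(X_{2},t)-X_{1}H_{1}(X_{1},t)$.

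The remaining step is to combine three sources of divergent boundary data as $X_{1}\to-\infty$, $X_{2}\to+\infty$: the polynomial part of $xH_{1}$, the antiderivative of $K$, and the antiderivatives of the regularization $6^{1/3}x^{4/3} + 2\cdot 6^{-1/3}t\, x^{2/3} - \tfrac{x}{36(x^{2}+1)}$ appearing inside the corollary's integrand. Term by term, the coefficients of each divergent contribution $x^{7/3}$, $x^{5/3}$, $x$, $x^{1/3}$, $\log(x^{2}+1)$ evaluated at the two endpoints must sum to zero. The main obstacle is this term-by-term bookkeeping; in particular one must handle the sign convention $x^{1/3}=-|x|^{1/3}$ for $x<0$ carefully, since the fractional-power antiderivatives produce nontrivial contributions at both $\pm\infty$. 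Once the cancellations are verified, the $o(1)$ remainder from $xH_{1}$ vanishes in the limit and Theorem \ref{Thm-total-integral} closes the argument.
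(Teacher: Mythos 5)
Your strategy is the same as the paper's: integrate by parts using $\partial_x H_1 = u$, evaluate the boundary terms with the expansion \eqref{eq-asym-Hamiltonians-fixed-t}, and use the total integral \eqref{eq-total-integral-H1} to handle $\int H_1\,dx$. However, the one step you identify as ``the main obstacle'' --- the term-by-term cancellation of the divergent endpoint contributions --- you do not actually carry out, and when you do carry it out it \emph{fails} as the corollary is currently stated. Adding the boundary term from $xH_1$ to the antiderivatives coming from $K$ plus the corollary's regularizer, the $x^{7/3}$, $x^{5/3}$, $x$, constant, and $\log$-type pieces cancel, but the $x^{1/3}$ pieces do not: the integrals contribute $\tfrac{6^{1/3}t^3}{3}\,x^{1/3}\big|_{X_1}^{X_2}$ while $xH_1$ contributes only $-\tfrac{6^{1/3}t^3}{9}\,x^{1/3}\big|_{X_1}^{X_2}$, leaving a residual
\begin{equation*}
\frac{2\cdot 6^{1/3}}{9}\,t^3\left(X_2^{1/3}-X_1^{1/3}\right)\longrightarrow +\infty .
\end{equation*}
You can see the same problem without any integration by parts: substituting \eqref{eq-asym-u-fixed-t} directly into the corollary's integrand gives
\begin{equation*}
x\,u(x,t)+6^{1/3}x^{4/3}+2\cdot 6^{-1/3}t\,x^{2/3}-\frac{x}{36(x^2+1)}=\frac{2\cdot 6^{1/3}}{27}\,t^3\,x^{-2/3}+\mathcal{O}\!\left(x^{-4/3}\right),
\end{equation*}
and $x^{-2/3}$ is not integrable at $\pm\infty$, so for $t\neq 0$ the stated integral does not even converge. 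The fix is to include one more regularizing term, $-\tfrac{2\cdot 6^{1/3}}{27}t^3 x^{-2/3}$, in the integrand; with that term the residual above is cancelled exactly and your bookkeeping closes with right-hand side $0$. So the structure of your argument is sound, but the unverified ``must sum to zero'' assertion is precisely where the statement needs correcting, and leaving it unchecked is a genuine gap in the proof.
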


\section{RH analysis for the Painlev\'{e} I hierarchy} \label{sec:RH-analysis}
In this section, we perform RH analysis to obtain the uniform asymptotic expansion stated in Theorem \ref{Thm-asym-u}, using an approach similar to \cite{Claeys-2010, Claeys-Vanlessen-2007}. Our goal is to derive a complete asymptotic expansion and demonstrate that the coefficients $e_{k}^{\pm}(\mu)$ are analytic in a neighborhood of $(-\infty,M]$ with respect to $\mu$. Therefore, we will pay more attention to the influence of $\mu$ in the RH analysis. To keep it concise, we will only present the essential steps here and refer to \cite{Claeys-2010, Claeys-Vanlessen-2007} for more detailed information. Additionally, we will only analyze the case of $x\to+\infty$, as it is similar to the case of $x\to-\infty$.

\subsection{The steepest descent analysis}

The tritronqu\'{e}e solution $u(x,t)$ of $\textrm{P}_{\textrm I}^2$ equation is associated with a RH problem as follows; see \cite{FAS-2006} for more detailed information about the Painlev\'e equations and RH problems.
\\

\noindent\textbf{RH problem for $Y(\lambda)$}

\begin{enumerate}
\item[(Y1)] $Y(\lambda)$ is analytic for $\lambda \in \mathbb{C}\setminus\Gamma$, where $\Gamma=\cup_{k=1}^{4}\Gamma_{4}$ is described in Figure \ref{Y-contour};

\begin{figure}[h]
\centering
\includegraphics[scale=1.5]{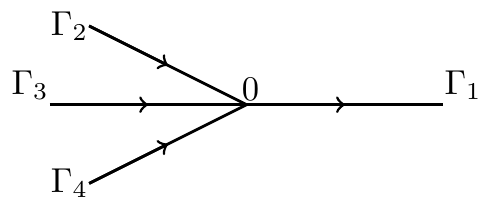} 
\caption{Contours for the RH problem for $Y$} \label{Y-contour}
\end{figure}

\item[(Y2)] $Y(\lambda)$ satisfies the following jump conditions on $\Gamma$:
\begin{equation}
Y_{+}(\lambda)=Y_{-}(\lambda)\begin{cases}\left(\begin{matrix}
1 & 1\\ 0& 1
\end{matrix}\right), \qquad \qquad &\lambda\in\Gamma_{1},\\
\left(\begin{matrix}
1 & 0\\ 1& 1
\end{matrix}\right), \qquad \qquad &\lambda\in\Gamma_{2}\cup\Gamma_{4},\\
\left(\begin{matrix}
0 & 1\\ -1& 0
\end{matrix}\right), \qquad \qquad &\lambda\in\Gamma_{3};
\end{cases}
\end{equation}

\item[(Y3)] When $\lambda\to\infty$, the asymptotic expansion of $Y(\lambda)$ is
\begin{equation}\label{eq-asym-Y}
Y(\lambda)\sim\left(I+\sum\limits_{k=1}^{\infty}A_{k}(x,t)\lambda^{-k}\right)\lambda^{-\frac{1}{4}\sigma_{3}}Ne^{-\theta(\lambda;x,t)\sigma_{3}},
\end{equation}
where 
\begin{equation}
N=\frac{1}{\sqrt{2}}\left(\begin{matrix}
1&1\\-1&1
\end{matrix}\right)e^{-\frac{1}{4}\pi i\sigma_{3}},\qquad \qquad \sigma_{3}=\left(\begin{matrix}
1&0\\0&-1
\end{matrix}\right)
\end{equation}
and
\begin{equation} \label{eq-theta-def}
\theta(\lambda;x,t)=\frac{1}{105}\lambda^{\frac{7}{2}}-\frac{1}{3}t\lambda^{\frac{3}{2}}+x\lambda^{\frac{1}{2}}.
\end{equation}
In the above formulas, the branch is chosen such that $\arg{\lambda}\in(-\pi,\pi)$. 
\end{enumerate}

It has been proved in \cite{Claeys-Vanlessen-2007} that the RH problem for $Y(\lambda)$ is uniquely solvable for all real values of $x$ and $t$. The tritronqu\'{e}e solution $u(x,t)$ for the $\textrm{P}_{\textrm I}^2$ equation \eqref{eq-PI-hierarchy} is given explicitly by 
\begin{equation}\label{eq-u-by-A1}
u(x,t)=2(A_{1})_{11}-(A_{1})_{12}^{2},
\end{equation}
where $A_1= A_1(x,t)$ is the coefficient matrix in \eqref{eq-asym-Y}, and $(A_{1})_{ij}$ denotes its $(i,j)$-entry.

To investigate the asymptotics of the RH problem for $Y(\lambda)$ as $x \to +\infty,$ let us first make the rescaling $t=\mu |x|^{\frac{2}{3}}$ and $ \lambda=\zeta |x|^{\frac{1}{3}}$. Then, the function $\theta(\cdot)$ in \eqref{eq-theta-def} can be written as
\begin{equation}
\theta(|x|^{\frac{1}{3}}\zeta; x,t)=|x|^{\frac{7}{6}}\left(\frac{1}{105}\zeta^{\frac{7}{2}}-\frac{\mu}{3}\zeta^{\frac{3}{2}}+\zeta^{\frac{1}{2}}\right).
\end{equation}
Next, in order to normalize the large-$\lambda$ behavior in \eqref{eq-asym-Y}, we introduce the following $g$-function:
\begin{equation}\label{eq-def-g}
g(\zeta)=c_{1}(\zeta-z_{+})^{\frac{7}{2}}+c_{2}(\zeta-z_{+})^{\frac{5}{2}}+c_{3}(\zeta-z_{+})^{\frac{1}{2}}, \qquad \zeta \in \mathbb{C} \setminus (-\infty, z_+]
\end{equation}
with $\arg(\zeta-z_{+})\in(-\pi,\pi)$, where $z_{+}=z_{+}(\mu)$ is the real root of $z^3-24\mu z+48=0$. The constants $c_k$ in the above formula are chosen to be 
\begin{equation}\label{eq-c1-c2-c3}
c_{1}=\frac{1}{105},\qquad c_{2}=\frac{z_{+}}{30},\qquad c_{3}=-\frac{\mu}{3}+\frac{1}{24}z_{+}^2,
\end{equation} 
such that 
\begin{equation}\label{eq-compare-g-theta}
|x|^{\frac{7}{6}}g(\zeta)=\theta(|x|^{\frac{1}{3}}\zeta; x,t)+d_{1}\zeta^{-\frac{1}{2}}+\mathcal{O}(\zeta^{-\frac{3}{2}}), \qquad \text{as } \zeta\to\infty,
\end{equation}
with $d_{1}$ being a constant independent of $\zeta$.  

To facilitate our subsequent analysis, let us list some important properties of $g(\zeta)$ below.

\begin{lemma}\label{lem-g}
Given any constant $M$ with $0<M<M_{+}=3^{\frac{5}{3}}2^{-\frac{2}{3}}5^{-\frac{1}{3}}$, we have, for all $\mu\in(-\infty,M]$, 
\begin{enumerate}
\item [(i)] $g(\zeta)>0$ when $\zeta>z_{+}$;
\item [(ii)] $\re g_{\pm}(\zeta)\equiv 0$ and $\im g_{\pm}(\zeta)\neq 0$ when $\zeta<z_{+}$;
\item [(iii)] $\im g_{+}'(\zeta)>0$ when $\zeta<z_{+}$;
\item [(iv)] $\frac{1}{|g(\zeta)|}=\mathcal{O}(z_{+}^2-8\mu)^{-1}$ as $\mu\to-\infty$ uniformly for all $\zeta$ bounded away from $z_{+}$.
\end{enumerate}
\end{lemma}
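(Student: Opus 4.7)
The plan is to exploit the fact that $g$ factors cleanly: writing $g(\zeta) = (\zeta - z_+)^{3/2}\, R(\zeta - z_+)$ with $R(w) = c_1 w^2 + c_2 w + c_3$ a quadratic, all four claims reduce to sign analyses of $R$ (and, for (iii), of one related auxiliary quadratic). The central algebraic fact is that the discriminant $\Delta_R := c_2^2 - 4 c_1 c_3$, after substituting the formulas from \eqref{eq-c1-c2-c3} and simplifying with the defining cubic $z_+^3 - 24 \mu z_+ + 48 = 0$, vanishes exactly at $\mu = M_+ = 3^{5/3} 2^{-2/3} 5^{-1/3}$. Hence $\Delta_R < 0$ for $\mu < M_+$, and since $c_1 = 1/105 > 0$, the quadratic $R$ is strictly positive on all of $\mathbb{R}$.

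Claim (i) is then immediate from $(\zeta - z_+)^{3/2} > 0$ on $(z_+, +\infty)$. For (ii), I would parameterize $\zeta = z_+ - s$ with $s > 0$ and use the branch convention $\arg(\zeta - z_+) = \pm \pi$ to obtain $g_\pm(\zeta) = \mp i\, s^{3/2}\, R(-s)$, whence $\re g_\pm \equiv 0$ and $\im g_\pm \neq 0$ both follow from $R > 0$. For (iii), a direct computation yields
\[
  \im g_+'(\zeta) \;=\; s^{1/2}\, \Bigl(\tfrac{7}{2} c_1 s^2 - \tfrac{5}{2} c_2 s + \tfrac{3}{2} c_3\Bigr),
\]
so it suffices to show the bracketed quadratic is positive for $s > 0$. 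Its leading coefficient is positive; by Vieta its roots have sum $5 c_2/(7 c_1)$ and product $3 c_3/(7 c_1)$. Since $z_+ < 0$ along the correct branch of the cubic (the continuation of $z_+(0) = -2 \cdot 6^{1/3}$) we have $c_2 < 0$, while $c_3 = (z_+^2 - 8 \mu)/24 > 0$ throughout $\mu \leq M < M_+$ (a direct consequence of the identity $z_+^2 - 8 \mu = 16 \mu - 48/z_+$ combined with $z_+ < 0$). Both real roots (if any) must then lie in $(-\infty, 0)$, giving the required positivity on $(0, +\infty)$.

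For (iv), on the complement of the cut one has $|g(\zeta)| = |\zeta - z_+|^{3/2}\, R(\zeta - z_+)$. With $|\zeta - z_+| \geq \delta > 0$ it then suffices to bound $R$ below by its global minimum $c_3 - c_2^2/(4 c_1)$. As $\mu \to -\infty$, the defining cubic forces $z_+ = 2/\mu + \mathcal{O}(\mu^{-4}) \to 0^-$, so $c_2 = z_+/30 \to 0$ while $c_3 \sim -\mu/3 \to +\infty$; hence $c_2^2/(4 c_1 c_3) \to 0$ and $R \geq c_3/2$ uniformly for $|\mu|$ sufficiently large. This yields $1/|g(\zeta)| \leq 2\delta^{-3/2}/c_3 = \mathcal{O}((z_+^2 - 8\mu)^{-1})$, as claimed.

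I expect the hardest step to be the explicit identification of $M_+$: the coupled system $\{c_2^2 = 4 c_1 c_3,\ z_+^3 - 24 \mu z_+ + 48 = 0\}$ must be reduced to a single polynomial equation in $\mu$ and simplified to the closed form $3^{5/3} 2^{-2/3} 5^{-1/3}$. Once this, together with the sign facts $c_2 < 0$ and $c_3 > 0$ along the selected branch, is in hand, the rest of the lemma reduces to the routine sign analyses of two quadratics outlined above.
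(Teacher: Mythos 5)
Your proof takes essentially the same approach as the paper: factor $g(\zeta) = (\zeta - z_+)^{3/2} R(\zeta - z_+)$ with $R$ the quadratic $c_1 w^2 + c_2 w + c_3$, establish $R>0$ on $\mathbb{R}$ by showing the discriminant $c_2^2 - 4c_1 c_3$ is negative for $\mu < M_+$ (your explicit identification of $M_+$ as the vanishing locus of the discriminant makes precise a step the paper handles by the equivalent inequality $\mu < \tfrac{3}{80}z_+^2 \Leftrightarrow -480^{1/3}<z_+<0$), and then analyze the quadratic factor of $g'$ for (iii) — where the paper simply observes all three coefficients $\tfrac{7}{2}c_1$, $-\tfrac{5}{2}c_2$, $\tfrac{3}{2}c_3$ are nonnegative, which is a touch more direct than the Vieta route. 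One small imprecision in (iv): the equality $|g(\zeta)| = |\zeta - z_+|^{3/2}\, R(\zeta - z_+)$ and the appeal to the ``global minimum'' of $R$ are only literal for real $\zeta$, whereas the bound is used on $\partial U(z_+,\delta)$; for complex $\zeta$ one should write $|R(\zeta - z_+)|$ and bound it below by $c_3 - c_1|\zeta - z_+|^2 - |c_2||\zeta - z_+|$, which still yields $|R| \gtrsim c_3$ as $\mu\to-\infty$ since $c_3\to+\infty$ while $c_1$ is fixed and $c_2\to 0$.
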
 
\begin{proof}
It is easily seen from its definition in \eqref{eq-def-g} that $g(\zeta)$ is real when $\zeta>z_{+}$ and $\re g_{\pm}(\zeta) = 0$ when $\zeta<z_{+}$. Next, we establish (i) and (ii) by proving  $z_{+}$ is the unique real zero of $g(\zeta)$. This can be justified by imposing the condition $c_{2}^2-4c_{1}c_{3}<0$, which implies $\mu<\frac{3}{80}z_{+}^2$. Recalling $\mu=\frac{z_{+}^3+48}{24z_{+}}$, then we have $-480^{\frac{1}{3}}<z_{+}<0$. This agrees with the condition $\mu\in(-\infty, M]$. In such a way, we prove (i) and (ii).

To prove (iii), we first have from \eqref{eq-def-g} that 
\begin{equation}
g'(\zeta)= (\zeta-z_{+})^{\frac{1}{2}}\left[\frac{1}{30}(\zeta-z_{+})^{2}+\frac{1}{12}z_{+}(\zeta-z_{+})-\left(\frac{\mu}{2}-\frac{z_{+}^2}{16}\right)\right].
\end{equation}
Based on the derivation in the previous paragraph, it is clear that $z_+< 0$ and $\mu<\frac{3}{80}z_{+}^2$. Then, we get
\begin{equation}
\im g_{+}'(\zeta)\geq|z_{+}-\zeta|^{\frac{1}{2}}\left(\frac{z_{+}^2}{16}-\frac{\mu}{2}\right)>0, \qquad \textrm{when }\zeta<z_{+}.
\end{equation}
Last, it follows from \eqref{eq-def-g} that $\frac{1}{|g(\zeta)|} = \mathcal{O}(c_3^{-1}) $ as $\mu \to \infty$. Recalling the definition of $c_3$ in \eqref{eq-c1-c2-c3}, we immediately get (iv).
\end{proof}

\begin{remark}\label{remark-g}
From (iii) in the above lemma, a straightforward derivation using the Cauchy-Riemann equations  implies that there exists a constant $\theta_{0}$, such that $\re g(\zeta)<0$ when $\arg(\zeta-z_{+})=\pm\pi+\theta_{0}$.
\end{remark}

\begin{remark}
When considering the asymptotics as $x\to-\infty$, the role of $z_{+}$ will be replaced by $z_{-}$, the real root of $z^3-24\mu z-48=0$. In this case, the upper bound of $M$ in the above lemma should be replaced by  $M_{-}=2^{-\frac{2}{3}}3^{-\frac{1}{3}}$. Therefore, to ensure the asymptotic expansions in Theorem \ref{Thm-asym-u} are valid as $x\to\pm\infty$, we set $M\in(0,2^{-\frac{2}{3}}3^{-\frac{1}{3}})$. 
\end{remark}

\subsection*{Normalization: $Y \mapsto T$}

Define
\begin{equation}
T(\zeta)=|x|^{\frac{1}{12}\sigma_{3}}\left(\begin{matrix}
1&0\\d_{1}|x|^{\frac{1}{6}}&1
\end{matrix}\right)
\begin{cases}
Y(|x|^{\frac{1}{3}}\zeta)e^{|x|^{\frac{7}{6}}g(\zeta)\sigma_{3}}, & \zeta\in I\cup II\cup III,\\
Y(|x|^{\frac{1}{3}}\zeta)\left(\begin{matrix}1&0\\1&1\end{matrix}\right)e^{|x|^{\frac{7}{6}}g(\zeta)\sigma_{3}}, & \zeta\in I'\cup I''.
\end{cases}
\end{equation}
where $d_{1}$ is given in \eqref{eq-compare-g-theta} and the regions $I, I', I'', II, III$ are described in Fig. \ref{fig-T}. A direct calculation from the RH problem for $Y(\lambda)$ leads to the following RH problem for $T(\zeta)$.

\smallskip

\begin{figure}[h]
\centering
\includegraphics[scale=1.25]{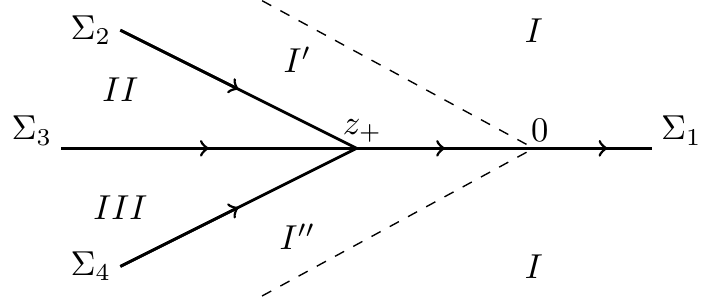} 
\caption{Regions and contours for the RH problem for $T$} \label{fig-T}
\end{figure}

\smallskip 

\noindent\textbf{RH problem for $T(\zeta)$}

\begin{enumerate}
\item[(T1)] $T(\zeta)$ is analytic in $\mathbb{C}\setminus\Sigma$, where $\Sigma=\cup_{k=1}^{4}\Sigma_{k}$ is described in Fig. \ref{fig-T};

\item[(T2)] $T(\zeta)$ satisfies the following jump conditions on $\Sigma$:
\begin{equation} \label{eq: t-jump}
T_{+}(\zeta)=T_{-}(\zeta)\begin{cases}
\left(\begin{matrix}
1 & e^{-2|x|^{\frac{7}{6}}g(\zeta)}\\ 0 & 1
\end{matrix}\right), &\qquad \zeta\in\Sigma_{1},\\
\left(\begin{matrix}
1 & 0\\ e^{2|x|^{\frac{7}{6}}g(\zeta)} & 1
\end{matrix}\right), &\qquad \zeta\in\Sigma_{2}\cup\Sigma_{4},\\
\left(\begin{matrix}
0 & 1\\ -1 & 0
\end{matrix}\right), &\qquad \zeta\in\Sigma_{3}.
\end{cases}
\end{equation}

\item[(T3)] As $\zeta \to \infty$,  we have
\begin{equation} \label{eq:T-large}
T(\zeta)=\left(I+\frac{T_{1}(x,t)}{\zeta}+\mathcal{O}(\zeta^{-2})\right)\zeta^{-\frac{1}{4}\sigma_{3}}N,
\end{equation}
with 
\begin{equation}\label{eq-def-B1}
T_{1}(x,t)=\left(\begin{matrix}
\frac{d_{1}^2}{2}+|x|^{-\frac{1}{3}}(A_{1})_{11}-d_{1}|x|^{-\frac{1}{6}}(A_{1})_{12} &\qquad |x|^{-\frac{1}{6}}(A_{1})_{12}-d_{1}\\
*  &  *
\end{matrix}\right),
\end{equation}
where $A_{1}$ is given in \eqref{eq-asym-Y} and $*$ denotes some unimportant entries. 
\end{enumerate}

A combination of \eqref{eq-u-by-A1} and \eqref{eq-def-B1} yields that 
\begin{equation}\label{eq-u-by-T}
u(x,t)=2|x|^{\frac{1}{3}}(T_{1})_{11}-|x|^{\frac{1}{3}}(T_{1})_{12}^2.
\end{equation}

\subsection*{Global parametrix}
From Lemma \ref{lem-g} and Remark \ref{remark-g}, one can see the jump matrices in \eqref{eq: t-jump} tend to the identity matrix when $x \to +\infty$, except for the one on $\Sigma_3$. This gives us a global parametrix for $P^{(\infty)}(\zeta)$, only possessing a jump on $(-\infty,z_{+})$.

\medskip 

\noindent\textbf{RH problem for $P^{(\infty)}(\zeta)$}
\begin{itemize}
\item $P^{(\infty)}(\zeta)$ is analytic for $\zeta \in \mathbb{C}\setminus (-\infty,z_{+}]$;

\item For $\zeta \in (-\infty,z_{+})$, we have $
P_{+}^{(\infty)}(\zeta)=P_{-}^{(\infty)}(\zeta)\left(\begin{matrix}
0&1\\-1&0
\end{matrix}\right)$;

\item As $\zeta \to \infty$, we have $
P^{(\infty)}(\zeta)=\left(I+\mathcal{O}(\zeta^{-1})\right)\zeta^{-\frac{1}{4}\sigma_{3}}N.$
\end{itemize}

It is easy to check that  the above RH problem possesses a solution as follows
\begin{equation} \label{eq-Pinf-def}
P^{(\infty)}(\zeta)=(\zeta-z_{+})^{-\frac{\sigma_{3}}{4}}N, \qquad \zeta \in \mathbb{C}\setminus(-\infty,z_{+}].
\end{equation}
Moreover, via a straightforward computation, we have from \eqref{eq:T-large} and the above formula
\begin{equation}
T(\zeta)P^{(\infty)}(\zeta)^{-1}=I+\left(T_{1}-\frac{z_{+}\sigma_{3}}{4}\right)\frac{1}{\zeta}+  \mathcal{O} (\zeta^{-2}) \qquad \textrm{as } \zeta\to\infty.
\end{equation}

\subsection*{Local parametrix near $z_+$}


Denote $U(z_{+},\delta)$ by the neighborhood  $\{\zeta: |\zeta-z_{+}|\leq\delta\}$, where $\delta$ is a small and fixed constant. In $U(z_{+},\delta)$, a local parametrix $P(\zeta)$ is constructed to approximate $T(\zeta)$ in terms of the Airy functions; for example, see \cite[Section 2.4]{Claeys-2010}. More precisely, we have 
\begin{equation} \label{eq-local-para}
P(\zeta)=(\zeta-z_{+})^{-\frac{\sigma_{3}}{4}}\left(|x|^{\frac{7}{9}}f(\zeta)\right)^{\frac{\sigma_{3}}{4}}M(|x|^{\frac{7}{9}}f(\zeta)),
\end{equation}
where 
\begin{equation}\label{eq-z-def}
f(\zeta)=\left(\frac{3}{2}g(\zeta)\right)^{\frac{2}{3}}
\end{equation} 
is a conformal mapping in $U(z_{+},\delta)$ and $M(\cdot)$ is the standard Airy parametrix; see the explicit RH problem for $M$ in \cite[Section 2.4]{Claeys-Vanlessen-2007}. Here, we omit its details and only record its large-$z$ behaviour below:
\begin{equation} \label{eq-M-asymp}
M(z)\sim \left(I+\sum\limits_{k=1}^{\infty}B_{k}z^{-k}\right)z^{-\frac{1}{4}\sigma_{3}}N, \qquad \textrm{as } z\to\infty,
\end{equation}
where $N$ is given in \eqref{eq-asym-Y} and 
\begin{equation}
B_{3k-2}=\left(\begin{matrix}
0&0\\t_{2k-1}&0
\end{matrix}\right),\qquad B_{3k-1}=\left(\begin{matrix}
0&\hat{t}_{2k-1}\\0&0
\end{matrix}\right),\qquad B_{3k}=\left(\begin{matrix}
\hat{t}_{2k}&0\\0&t_{2k}
\end{matrix}\right)
\end{equation}
with 
\begin{equation}\label{eq-tk-hat-tk}
\hat{t}_{k}=\frac{\Gamma(3k+1/2)}{36^{k}k!\Gamma(k+1/2)
},\qquad \qquad t_{k}=-\frac{6k+1}{6k-1}\hat{t}_{k}.
\end{equation}

%

%

\subsection*{Final transformation $T \mapsto R$}

The final transformation is defined as 
\begin{equation}
R(\zeta)=\begin{cases}T(\zeta)P^{(\infty)}(\zeta)^{-1},\qquad& \text{for }\zeta\in(\mathbb{C}\setminus U(z_{+},\delta))\setminus \Sigma,\\
T(\zeta)P(\zeta)^{-1},\qquad &\text{for } \zeta\in U(z_{+},\delta)\setminus\Sigma.
\end{cases}
\end{equation}
Then $R(\zeta)$ satisfies the following RH problem.

\medskip

\noindent\textbf{RH problem for $R(\zeta)$}

\begin{enumerate}
\item[(R1)] $R(\zeta)$ is analytic in $\mathbb{C}\setminus\Sigma_{R}$, where $\Sigma_{R}=\Sigma_{R,1}\cup\Sigma_{R,2}\cup\Sigma_{R,4}\cup\partial U(z_{+},\delta)$ is described in Figure \ref{fig-R};
\begin{figure}[h]
\centering\includegraphics[scale=1]{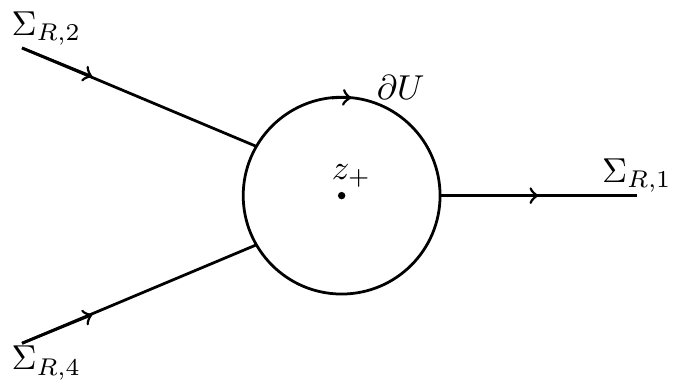}
\caption{Contours for the RH problem for  $R$.} \label{fig-R}
\end{figure}

\item[(R2)] When $\zeta\in\Sigma_{R}$, we have 
\begin{equation}\label{eq-jump-R}
R_{+}(\zeta)=R_{-}(\zeta)J_R(\zeta)
\end{equation}
where 
\begin{equation} \label{eq-JR-formula}
J_R(\zeta)=\begin{cases}P^{(\infty)}(\zeta)\left(\begin{matrix}
1 & e^{-2|x|^{\frac{7}{6}}g(\zeta)}\\ 0 & 1
\end{matrix}\right)P^{(\infty)}(\zeta)^{-1}, \qquad & \zeta\in\Sigma_{R,1},\\
P^{(\infty)}(\zeta)\left(\begin{matrix}
1 &0 \\ e^{2|x|^{\frac{7}{6}}g(\zeta)} & 1
\end{matrix}\right)P^{(\infty)}(\zeta)^{-1}, \qquad & \zeta\in\Sigma_{R,2}\cup\Sigma_{R,4}, \\
P(\zeta)P^{(\infty)}(\zeta)^{-1}, &  \zeta\in\partial{U(z_{+},\delta)};
\end{cases}
\end{equation}

\item[(R3)]As $\zeta\to\infty$, we have 
\begin{equation}\label{eq-R3-C1-T1}
R(\zeta)=I+\frac{C_{1}(x,t)}{\zeta}+\mathcal{O}(\zeta^{-2}), \quad \text{where } C_{1}=T_{1}-\frac{z_{+}\sigma_{3}}{4}.
\end{equation}
\end{enumerate}

Next, we will conduct a detailed asymptotic analysis on $R(\zeta)$ to establish a full asymptotic expansion \eqref{eq-asym-u} in Theorem \ref{Thm-asym-u}, as well as the analyticity of the coefficients $e_{k}^{\pm}(\mu)$. This part is new comparing with the existing RH analysis related to the tritronqu\'{e}e solution $u(x,t)$.

\subsection{Asymptotics of $R(\zeta)$}

As $x \to +\infty$, the jump matrix $J_R(\zeta)$ in \eqref{eq-jump-R} decays exponentially to the identical matrix uniformly on $\Sigma_{R,1}\cup\Sigma_{R,2}\cup\Sigma_{R,4}$. On $\partial U(z_{+},\delta)$, it follows from \eqref{eq-Pinf-def}, \eqref{eq-local-para}, \eqref{eq-M-asymp} and \eqref{eq-JR-formula} that
\begin{equation}\label{eq-v2-def}
\begin{split}
J_R(\zeta) = P(\zeta)P^{(\infty)}(\zeta)^{-1} &\sim I+\sum\limits_{k=1}^{\infty}Q_{k}(\zeta)|x|^{-\frac{7}{6}k}, \qquad \textrm{as } x\to+\infty,
\end{split}
\end{equation}
with 
\begin{equation}\label{eq-def-Qk}
\begin{split}
Q_{2k}(\zeta)=&B_{3k}\left(\frac{3}{2}g(\zeta)\right)^{-2k}=\left(\begin{matrix}
\hat{t}_{2k}&0\\0&t_{2k}
\end{matrix}
\right)\left(\frac{3}{2}g(\zeta)\right)^{-2k}, \\
Q_{2k-1}(\zeta)=&\left(\frac{B_{3k-1}}{(\zeta-z_{+})^{\frac{1}{2}}}+B_{3k-2}(\zeta-z_{+})^{\frac{1}{2}}\right)\left(\frac{3}{2}g(\zeta)\right)^{-2k+1}\\
=&\left(\begin{matrix}
0&\frac{\hat{t}_{2k-1}}{(\zeta-z_{+})^{\frac{1}{2}}}\\t_{2k-1}(\zeta-z_{+})^{\frac{1}{2}}&0
\end{matrix}
\right)\left(\frac{3}{2}g(\zeta)\right)^{-2k+1},
\end{split}
\end{equation} 
where $t_{k}$ and $\hat{t}_{k}$ are constants given in \eqref{eq-tk-hat-tk}. This implies that the RH problem for $R$ is a small norm problem when $x \to +\infty$. By a standard argument for RH problems \cite{Deift-book},   $R(\zeta)$ exists when $x$ is large enough and admits an asymptotic expansion as follows:
\begin{equation}\label{eq-R-series}
R(\zeta)\sim I+\sum\limits_{k=1}^{\infty}\frac{R_{k}(\zeta)}{x^{\frac{7}{6}k}}, \quad\text{ as }x\to+\infty,
\end{equation}
uniformly for $\zeta \in \mathbb{C} \setminus \Sigma_R$.

It is possible to get more information about $R_{k}(\zeta)$ in the above expansion. Substituting \eqref{eq-v2-def} and \eqref{eq-R-series} into \eqref{eq-jump-R}, we have, for $k \in \mathbb{N}^{+} $,
\begin{equation}\label{eq-Rk-reccursion}
R_{k,+}(\zeta)=R_{k,-}(\zeta)+\sum\limits_{m=0}^{k-1}R_{m,-}(\zeta)Q_{k-m}(\zeta), \qquad \zeta\in\partial{U(z_{+},\delta)},
\end{equation} 
with $R_{0}(\zeta) \equiv I$. First, we have the following result.

\begin{lemma}\label{lem-Rk-properties}
For any $k\in\mathbb{N}^{+}$, $R_{2k}(\zeta)$ is a diagonal matrix and $R_{2k-1}(\zeta)$ is an anti-diagonal matrix. 
\end{lemma}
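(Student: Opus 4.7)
The plan is to proceed by induction on $k$, exploiting the matrix parity structure of the jump matrices $Q_j(\zeta)$ together with the recursive formula \eqref{eq-Rk-reccursion}.

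First, I will record the crucial observation from \eqref{eq-def-Qk}: the matrices $Q_{2k}(\zeta)$ are diagonal and the matrices $Q_{2k-1}(\zeta)$ are anti-diagonal. This means that when we multiply matrices of definite parity (diagonal or anti-diagonal), the usual rules apply: $\textrm{diag} \cdot \textrm{diag} = \textrm{diag}$, $\textrm{anti} \cdot \textrm{anti} = \textrm{diag}$, and $\textrm{diag} \cdot \textrm{anti} = \textrm{anti} \cdot \textrm{diag} = \textrm{anti}$.

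Next, I will solve for $R_k(\zeta)$ from the jump condition \eqref{eq-Rk-reccursion}. Since $R_k(\zeta)$ is analytic in $\mathbb{C} \setminus \partial U(z_+, \delta)$ and vanishes at infinity (from the expansion \eqref{eq-R-series} together with the normalization $R(\zeta) \to I$), the Plemelj--Sokhotski formula gives the explicit representation
\begin{equation*}
R_{k}(\zeta) = \frac{1}{2\pi i}\oint_{\partial U(z_+,\delta)} \frac{\sum_{m=0}^{k-1} R_{m,-}(s) Q_{k-m}(s)}{s-\zeta}\,ds.
\end{equation*}
The key point is that this integral operation preserves the matrix parity: if the integrand is diagonal (resp.\ anti-diagonal), so is $R_k(\zeta)$.

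Now I carry out the induction. For the base case $k=1$, the sum reduces to $R_{0,-}(s) Q_1(s) = Q_1(s)$, which is anti-diagonal, hence $R_1(\zeta)$ is anti-diagonal. For the inductive step, assume that for all $m < k$, $R_m(\zeta)$ is diagonal when $m$ is even and anti-diagonal when $m$ is odd. Consider the term $R_{m,-}(\zeta) Q_{k-m}(\zeta)$ in the sum: if $k$ is even, then $m$ and $k-m$ have the same parity, so either both factors are diagonal or both are anti-diagonal, and in either case the product is diagonal; if $k$ is odd, then $m$ and $k-m$ have opposite parity, so exactly one factor is anti-diagonal, making the product anti-diagonal. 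Summing preserves this common parity, and so does the Cauchy integral. This completes the induction.

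This argument is essentially mechanical once the parity of the $Q_j$'s is observed; no obstacle is anticipated. The lemma will be used subsequently to extract the explicit form of the coefficients $e_k^{\pm}(\mu)$ in \eqref{eq-asym-u} via \eqref{eq-u-by-T} and \eqref{eq-R3-C1-T1}, where knowing which entries of $T_1$ vanish or contribute will simplify the computation considerably.
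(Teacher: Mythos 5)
Your proof is correct and follows essentially the same approach as the paper: observe the diagonal/anti-diagonal parity of the $Q_j(\zeta)$ from \eqref{eq-def-Qk} and propagate it through the recursion \eqref{eq-Rk-reccursion} by induction. The paper leaves the induction terse; you simply spell out the parity bookkeeping and the Plemelj representation, which fills in what the paper takes as obvious.
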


\begin{proof}
Note that the coefficient matrices $Q_k(\zeta)$ in \eqref{eq-v2-def} possess diagonal and anti-diagonal structures; see \eqref{eq-def-Qk}. Using \eqref{eq-Rk-reccursion}, we obtain the lemma by mathematical induction.
\end{proof}

The next lemma give us the analyticity of $R_{k}(\zeta)$ with respect to $\mu$ in a neighborhood of $(-\infty,M]$.

\begin{lemma}\label{lem-Rk-analytic}
Denote $n_{2k}=3k,~  n_{2k-1}=3k-1$ for all $k\in\mathbb{N}^{+} $. We have the following representation for the coefficients $R_{k}(\zeta)$ in \eqref{eq-R-series}:
\begin{equation}\label{eq-Rk-Laurent-representation}
R_{k}(\zeta)=\begin{cases}\sum\limits_{m=1}^{n_{k}}\frac{R_{k}^{(-m)}}{(\zeta-z_{+})^{m}},& \zeta\in\mathbb{C}\setminus{\overline{U(z_{+},\delta)}},\smallskip \\
\sum\limits_{m=0}^{+\infty}R_{k}^{(m)}(\zeta-z_{+})^{m},&\zeta\in U(z_{+},\delta),
\end{cases}
\end{equation}
where $R_{k}^{(m)}$ are polynomials of $\frac{1}{c_{3}}$  and analytic with respect to $\mu$ in a neighborhood of $(-\infty,M]$ for all $k \in \mathbb{N}^+$ and $m=-n_{k},-n_{k}+1,\cdots$. In particular, we have
\begin{eqnarray}
R_{1}^{(-1)}&=&\left(\begin{matrix}
0& -\frac{2c_{2}\hat{t}_{1}}{3c_{3}^2}\\ \frac{2t_{1}}{3c_{3}}&0
\end{matrix}\right), \label{eq:R1--1} \\
R_{2}^{(-1)}&=&\left(\begin{matrix}
\frac{4\hat{t}_{2}(3c_{2}^{2}-2c_{1}c_{3})-4t_{1}\hat{t}_{1}(c_{2}^2-c_{1}c_{3})}{9c_{3}^{4}}&0\\
0&\frac{4t_{2}(3c_{2}^{2}-2c_{1}c_{3})-4t_{1}\hat{t}_{1}(2c_{2}^2-c_{1}c_{3})}{9c_{3}^{4}}
\end{matrix}\right), \label{eq:R2--1} 
\end{eqnarray}
where $c_{k}$, $t_{k}$ and $\hat{t}_{k}$ are given  in \eqref{eq-c1-c2-c3} and \eqref{eq-tk-hat-tk}, respectively.
\end{lemma}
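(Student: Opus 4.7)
The plan is to proceed by strong induction on $k$, with a careful local analysis of $Q_k(\zeta)$ at $z_+$ as the central tool. The factorization $g(\zeta) = (\zeta-z_+)^{1/2} h(\zeta)$, where $h(\zeta) := c_3 + c_2(\zeta-z_+)^2 + c_1(\zeta-z_+)^3$, satisfies $h(z_+) = c_3 \neq 0$ for $\mu \in (-\infty, M]$ by the choice of $M$ and Lemma \ref{lem-g}. Substituting this factorization into \eqref{eq-def-Qk}, the apparent half-integer powers of $(\zeta-z_+)$ cancel pairwise, so every $Q_k(\zeta)$ is a rational function whose only singularity is a pole at $z_+$, of order $\lceil k/2 \rceil$ in the dominant matrix entry. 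Expanding $h^{-j}$ in Taylor series further shows that every Laurent coefficient $Q_k^{(\ell)}$ at $z_+$ is a polynomial in $1/c_3$ with coefficients polynomial in $c_1, c_2$ (and the universal constants $t_j, \hat t_j$).

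For the inductive step I treat \eqref{eq-Rk-reccursion} as an additive scalar-type RH problem on $\partial U(z_+, \delta)$ with jump $J_k(\zeta) := \sum_{m=0}^{k-1} R_{m,-}(\zeta) Q_{k-m}(\zeta)$. By induction, each $R_{m,-}$ is the boundary value of a function holomorphic in $U(z_+, \delta)$ with Taylor coefficients polynomial in $1/c_3$, so $J_k$ is meromorphic in a full neighborhood of $\overline{U(z_+, \delta)}$ with its only singularity at $z_+$. The unique solution of the RH problem that is analytic off $\partial U(z_+, \delta)$ and vanishes at infinity is produced by splitting $J_k$ at $z_+$ into its principal part (a rational function with sole pole at $z_+$, which is $R_k$ on $\mathbb{C} \setminus \overline{U(z_+, \delta)}$) and its analytic (Taylor) part (which, up to the sign dictated by the orientation of $\partial U(z_+, \delta)$, is $R_k$ on $U(z_+, \delta)$). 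Both pieces inherit the polynomial-in-$1/c_3$ structure, yielding \eqref{eq-Rk-Laurent-representation}. The stated pole-order bounds $n_{2k} = 3k$ and $n_{2k-1} = 3k-1$ are comfortably loose and follow from a direct induction tracking pole orders through the products $R_{m,-}Q_{k-m}$.

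Analyticity in $\mu$ is then automatic. The implicit function theorem applied to $z^3 - 24\mu z + 48 = 0$ (whose $z$-derivative $3 z_+^2 - 24\mu = 72\, c_3$ is non-vanishing on $(-\infty, M]$) shows that $z_+(\mu)$ is real analytic on a complex neighborhood of $(-\infty, M]$, hence so are $c_2 = z_+/30$ and $c_3 = -\mu/3 + z_+^2/24$; and because $c_3$ stays bounded away from $0$, every polynomial in $1/c_3$ extends analytically in $\mu$ to that neighborhood.

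Finally, the explicit formulas \eqref{eq:R1--1} and \eqref{eq:R2--1} are obtained by performing the splitting explicitly at $k = 1, 2$. For $k = 1$, $J_1 = Q_1$ and one uses the expansion $1/h = 1/c_3 - (c_2/c_3^2)(\zeta-z_+)^2 + O((\zeta-z_+)^3)$. For $k=2$, $J_2 = Q_2 + R_1^{\mathrm{inside}}(\zeta) Q_1(\zeta)$, which requires in addition the first two Taylor coefficients of $R_1^{\mathrm{inside}}$ and the expansion of $h^{-2}$. The main obstacle will be the algebraic bookkeeping for $R_2^{(-1)}$: several cross-terms produced by products $R_1^{(j)} Q_1^{(-1-j)}$ must telescope into the precise combinations $(3c_2^2 - 2 c_1 c_3)$ and $(c_2^2 - c_1 c_3)$ appearing in \eqref{eq:R2--1}, and the diagonal structure guaranteed by Lemma \ref{lem-Rk-properties} provides a useful consistency check on these combinations.
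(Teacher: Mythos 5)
Your high-level strategy --- factor $g$ near $z_+$, observe that the half-integer powers in $Q_k$ cancel, set up the additive Riemann--Hilbert recursion on $\partial U(z_+,\delta)$, split into principal/analytic parts, and invoke the implicit function theorem for analyticity in $\mu$ --- is the same as the paper's, and the analyticity argument via $3z_+^2 - 24\mu = 72c_3 \neq 0$ is clean and correct. However, there is a genuine gap caused by taking equation \eqref{eq-def-g} at face value. As written, \eqref{eq-def-g} has a typo: the last exponent should be $\tfrac{3}{2}$, not $\tfrac{1}{2}$, so that $g(\zeta) = (\zeta-z_+)^{3/2}h(\zeta)$ with $h(\zeta) = c_1(\zeta-z_+)^2 + c_2(\zeta-z_+) + c_3$. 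One can see this is forced for several independent reasons: only with the exponent $\tfrac{3}{2}$ is $f(\zeta) = (\tfrac{3}{2}g)^{2/3}$ actually conformal at $z_+$ as required in \eqref{eq-z-def}; only then does the matching \eqref{eq-compare-g-theta} close up with $c_1,c_2,c_3$ as in \eqref{eq-c1-c2-c3} and $z_+^3-24\mu z_+ + 48=0$; only then does $g_{2k-1,-3k}=0$ hold; and only then does the residue of $Q_1$ reproduce \eqref{eq:R1--1} (with the exponent $\tfrac12$ and your expansion $1/h = 1/c_3 - (c_2/c_3^2)(\zeta-z_+)^2 + \cdots$, the $(1,2)$-residue of $Q_1$ is $\tfrac{2\hat t_1}{3c_3}$, not $-\tfrac{2c_2\hat t_1}{3c_3^2}$, and the $(2,1)$-entry has no pole at all, contradicting \eqref{eq:R1--1}).

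This error propagates to your pole-order claims. With the corrected $g$, one gets $(\tfrac{3}{2}g)^{-2k} = \mathcal{O}((\zeta-z_+)^{-3k})$ and hence $Q_k$ has a pole of order exactly $n_k$, not $\lceil k/2\rceil$. Consequently the bounds $n_k$ are not ``comfortably loose'': naively, $R_{m,-}Q_{k-m}$ could have a pole of order $n_m + n_{k-m}$, which exceeds $n_k$ by one whenever $m$ and $k-m$ are both odd. The way the paper's bound survives is that the top Laurent coefficients of the odd-indexed $R_{2j-1}$ and $Q_{2j'-1}$ are both strictly upper triangular $2\times 2$ nilpotent matrices (the $(2,1)$ block starts one order lower than the $(1,2)$ block), so the leading term of the product vanishes. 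This is exactly the parity/matrix structure of Lemma \ref{lem-Rk-properties}, which you relegate to ``a useful consistency check.'' In fact it is an essential ingredient in establishing \eqref{eq-Rk-Laurent-representation}, and a ``direct induction tracking pole orders'' without it would produce a bound strictly larger than $n_k$ for even $k$. You should correct the exponent in the factorization, recompute the Laurent data of $Q_k$, and make the upper-triangular leading-coefficient cancellation explicit in the inductive step.
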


\begin{proof}
When $k=1$,  the equation \eqref{eq-Rk-reccursion} is simply $R_{1,+}(\zeta)=R_{1,-}(\zeta)+Q_{1}(\zeta)$ for $\zeta\in\partial{U(z_{+},\delta)}$. It then follows from the Plemelj formula that 
\begin{equation}\label{eq-R1-by-integral}
R_{1}(\zeta)=\frac{1}{2\pi i}\oint_{\partial U(z_+, \delta)}\frac{Q_{1}(s)}{s-\zeta}ds,
\end{equation}
where the orientation of $\partial U(z_+, \delta)$ is depicted in Figure \ref{fig-R}. From  \eqref{eq-def-g} and \eqref{eq-def-Qk}, one can see that $Q_{1}(\zeta)$ has a pole of order 2 at $\zeta=z_+$. Denote its Laurent series at $\zeta=z_+$ by 
\begin{equation}\label{eq-Q1-Laurent-series}
Q_{1}(\zeta)=\sum\limits_{m=-2}^{+\infty}\frac{Q_{1}^{(m)}}{(\zeta-z_{+})^m}, \qquad |z-z_+| > 0, 
\end{equation}
where $Q_{1}^{(m)}, m=-2,-1,0,\cdots$, are polynomials of $\frac{1}{c_{3}}$. Combining the above two formulas, we have 
\begin{equation}
R_{1}(\zeta)=\begin{cases}
\frac{R_{1}^{(-2)}}{(\zeta-z_{+})^2}+\frac{R_{1}^{(-1)}}{\zeta-z_{+}},\qquad & \zeta\in\mathbb{C}\setminus\overline{U(z_+, \delta)}, \smallskip \\
\sum\limits_{m=0}^{+\infty}R_{1}^{(m)}(\zeta-z_{+})^{m},\qquad &\zeta\in U(z_+, \delta),
\end{cases}
\end{equation}
where $R_{1}^{(m)}=Q_{1}^{(m)}$ for $m = -1, -2$ and $R_{1}^{(m)}=-Q_{1}^{(m)}$ for  $m=0,1,2,\cdots$. In particular, $R_{1}^{(-1)}$ is given explicitly in \eqref{eq:R1--1}. Moreover, since $c_{3}$ is bounded away from zero when $\mu$ lies in a fixed neighborhood of $(-\infty,M]$, all the coefficients $R_{1}^{(m)}$ are analytic functions of $\mu$ for $ \mu \in (-\infty,M]$. 

The case for $k >1$ is similar. First, it follows from the definition of $g(\zeta)$ in \eqref{eq-def-g} that 
\begin{equation}
\begin{split}
\left(\frac{3}{2}g(\zeta)\right)^{-2k}&=\sum\limits_{i=-3k}^{\infty}g_{2k,i}(\zeta-z_{+})^{i}, \\
\left(\frac{3}{2}g(\zeta)\right)^{-2k+1}&=(\zeta-z_{+})^{-\frac{1}{2}}\sum\limits_{i=-3k}^{\infty}g_{2k-1,i}(\zeta-z_{+})^{i},
\end{split}
\end{equation}
where the coefficients $g_{k,m}$ are all polynomials of $\frac{1}{c_3}$ with degree at least 2, and analytic functions of $\mu$ in the neighborhood of $(-\infty,M]$. Note that  $g_{2k-1,-3k}=0$. It then follows \eqref{eq-def-Qk} that $Q_{k}(\zeta)$ possesses a pole of order $n_{k}$ at $\zeta=z_{+}$. Using the above formula, we also have the Laurent series for $Q_{k}(\zeta)$ below 
\begin{equation}
Q_{k}(\zeta)=\sum\limits_{m=-n_{k}}^{+\infty}Q_{k}^{(m)}(\zeta-z_{+})^{m},\qquad |z-z_+| > 0, 
\end{equation}
with
\begin{equation}\label{eq-Qk-explicit-representation}
Q_{2k}^{(m)}=\left(\begin{matrix}
\hat{t}_{2k}g_{2k,m}&0\\ 0&t_{2k}g_{2k,m}
\end{matrix}\right), \quad Q_{2k-1}^{(m)}=\left(\begin{matrix}
0&\hat{t}_{2k-1}g_{2k-1,m+1}\\ t_{2k-1}g_{2k-1,m}&0
\end{matrix}\right),
\end{equation}
for all $m\geq -n_{k}$.  From \eqref{eq-Rk-reccursion}, we have 
\begin{equation}
R_{k}(\zeta)=\frac{1}{2\pi i}\oint_{\partial U(z_{+},\delta)}\frac{Q_{k}(s)+\sum\limits_{m=0}^{k-1}R_{m,-}(s)Q_{k-m}(s)}{s-\zeta}ds.
\end{equation}
A straightforward computation yields 
\begin{equation}
R_{k}(\zeta)=\begin{cases}\sum\limits_{m=1}^{n_{k}}\frac{R_{k}^{(-m)}}{(\zeta-z_{+})^{m}},& \zeta\in\mathbb{C}\setminus{\overline{U(z_{+},\delta)}}, \smallskip \\
\sum\limits_{m=0}^{+\infty}R_{k}^{(m)}(\zeta-z_{+})^{m},&\zeta\in U(z_{+},\delta),
\end{cases}
\end{equation}
with the coefficients given as
\begin{equation}\label{eq-Rk-recurrence}
R_{k}^{(m)}=Q_{k}^{(m)}+\sum\limits_{j=1}^{k-1}\left(\sum\limits_{i=0}^{p_{j}+m}R_{j}^{(i)}Q_{k-j}^{(-i+m)}\right), \quad k\geq 2,
\end{equation}
for all $m \geq -n_{k}$. As $g_{k,m}$ are all analytic functions of $\mu$ in the neighborhood of $(-\infty,M]$, by mathematical induction, we can conclude that  $R_{k}^{(m)}$ are all polynomials of $\frac{1}{c_{3}}$ and analytic for $ \mu \in (-\infty,M]$. In particular, we have
\begin{equation}
\begin{split}
R_{2}^{(-1)}&=Q_{2}^{(-1)}+R_{1}^{(0)}Q_{1}^{(-1)}+R_{1}^{(1)}Q_{1}^{(-2)}=Q_{2}^{(-1)}-Q_{1}^{(0)}Q_{1}^{(-1)}-Q_{1}^{(1)}Q_{1}^{(-2)},
\end{split}
\end{equation}
which gives us \eqref{eq:R2--1}. This finishes the proof of the lemma.
\end{proof}

\begin{remark}
Besides \eqref{eq:R1--1} and \eqref{eq:R2--1}, it is possible to get explicit formulas of $R_{k}^{(-1)}$ for all $k \geq  3$ with the aid of \eqref{eq-Qk-explicit-representation} and the recursion formula \eqref{eq-Rk-recurrence}. Moreover, using \eqref{eq-R-series} and \eqref{eq-Rk-Laurent-representation}, we get the following approximation for the $\frac{1}{\zeta}$-coefficient $C_1$ in \eqref{eq-R3-C1-T1}:
\begin{equation}\label{eq-C1-asym-expansion}
C_{1}(x,t)\sim\sum\limits_{k=1}^{\infty}\frac{R_{k}^{(-1)}}{|x|^{\frac{7k}{6}}}, \qquad \text{ as  } x\to+\infty.
\end{equation}
Due to  Lemma \ref{lem-Rk-properties}, one can see that $R_{2k}^{(-1)}$ and $R_{2k-1}^{(-1)}$ are diagonal and anti-diagonal matrices, respectively.
\end{remark}

\section{Proof of main results}\label{sec-proof-result}

In the last section, we first derive uniform asymptotics for the tritronqu\'{e}e solution $u(x,t)$ in Theorem \ref{Thm-asym-u}. Then, we establish the total integrals for $u(x,t)$ and the associated Hamiltonian $H_1(x,t)$ in Theorem \ref{Thm-total-integral}.

\medskip

\noindent\textbf{Proof of Theorem \ref{Thm-asym-u}:}  First, recalling \eqref{eq-u-by-T} and \eqref{eq-R3-C1-T1}, we have 
\begin{equation}
u(x,t)=2|x|^{\frac{1}{3}}\left(\frac{z_{+}}{4}+(C_{1})_{11}\right)-|x|^{\frac{1}{3}}(C_{1})_{12}^{2}.
\end{equation}
It then follows from \eqref{eq-C1-asym-expansion} and the above formula that
\begin{equation}\label{eq-asym-u-proof}
\begin{split}
u(x,t)\sim&2|x|^{\frac{1}{3}}\left(\frac{z_{+}}{4}+\sum\limits_{k=1}^{\infty}\frac{(R_{2k}^{(-1)})_{11}}{|x|^{\frac{7k}{3}}}\right)-|x|^{\frac{1}{3}}\left(\sum\limits_{k=1}^{\infty}\frac{(R_{2k-1}^{(-1)})_{12}}{|x|^{\frac{(2k-1)7}{6}}}\right)^2, \qquad \textrm{as } x\to+\infty, 
\end{split}
\end{equation}
This gives us the asymptotic expansion \eqref{eq-asym-u} with the coefficients given below
\begin{equation}\label{e_k-by-R}
e_{k}^+(\mu) := e_{k}(z_{+},\mu)=2(R_{2k}^{(-1)})_{11}-\sum\limits_{i=1}^{k}(R_{2i-1}^{(-1)})_{12}(R_{2(k-i)+1}^{(-1)})_{12},\quad k\geq 1.
\end{equation}
When $k=1$, with the explicit expressions of $R_{1}^{(-1)}$ and $R_{2}^{(-1)}$ in \eqref{eq:R1--1} and \eqref{eq:R2--1}, we obtain
\begin{equation}
e_{1}^+(\mu)=2(R_{2}^{(-1)})_{11}-(R_{1}^{(-1)})_{12}^{2}=-\frac{64}{3}(z_{+}^2-8\mu)^{-3}+\frac{256z_{+}^2}{3}(z_{+}^2-8\mu)^{-4}.
\end{equation} 

Next, we show that $e_{k}^+(\mu)$ are  uniformly bounded for $\mu\in(-\infty,M]$. To see this, we recall Lemma \ref{lem-Rk-analytic}, which indicates that all the coefficients $R_{k}^{(m)}$ with $k \in \mathbb{N^+}$ and $m=-n_{k},-n_{k}+1,\cdots$, are polynomials of $\frac{1}{c_{3}}$. In particular, $\deg R_{1}^{(-1)} \geq 1$ and $\deg R_{k}^{(-1)} \geq 2$ for $k \geq 2$. This implies that, there exists a constant $\tilde{C}$, such that
\begin{equation}
|R_{1}^{(-1)}|\leq \tilde{C}|(z_{+}^2-8\mu)^{-1}| \quad\text{and} \quad  |R_{k}^{(-1)}| \leq \tilde{C}(z_{+}^2-8\mu)^{-2},\quad k\geq 2,
\end{equation}
uniformly for all $\mu\in(-\infty,M]$. 
Substituting the above approximations into \eqref{e_k-by-R}, we can see that all $e_{k}^+(\mu)$ are bounded uniformly for all $\mu\in(-\infty,M]$. Moreover, they satisfy the approximation \eqref{eq:ek-large-mu} as $\mu \to -\infty$.

When $x\to-\infty$, the RH analysis is similar to the case when $x\to+\infty$. One only needs to replace $z_{+}$ by $z_{-}$; see the definitions of $z_\pm$ in \eqref{zpm-def}. We have the asymptotic expansion \eqref{eq-asym-u}, with $z_{+}$ replaced by $z_{-}$. The coefficients $e_{k}^{-}(\mu)$ share the same properties as  $e_{k}^+(\mu)$.  

This finishes the proof of the theorem. \qed

\bigskip

Next, we will establish the total integrals.
\medskip

\noindent\textbf{Proof of Theorem \ref{Thm-total-integral}:} When $t$ is fixed, in view of the asymptotics of $u(x,t)$ in \eqref{eq-asym-u-fixed-t}, we add the factor $6^{\frac{1}{3}}x^{\frac{1}{3}}+2\cdot 6^{-\frac{1}{3}}tx^{-\frac{1}{3}}$ on both sides of  \eqref{eq-differential-equality-1}. Then, integrating  with respect to $x$ from $X_{1}$ and $X_{2}$, we have 
\begin{equation}
\begin{split}
&\int_{X_{1}}^{X_{2}}\left(u(x,t)+\sqrt[3]{6}x^{\frac{1}{3}}+\frac{2t}{\sqrt[3]{6}}x^{-\frac{1}{3}}\right)dx\\
=&H_{1}(X_{2},t)+\frac{3}{4}X_{2}^{\frac{4}{3}}+3\cdot 6^{-\frac{1}{3}}tX_{2}^{\frac{2}{3}}-H_{1}(X_{1},t)-\frac{3}{4}X_{1}^{\frac{4}{3}} -3\cdot 6^{-\frac{1}{3}}tX_{1}^{\frac{2}{3}}.
\end{split}
\end{equation}
Taking the limits $X_{1}\to-\infty$ and $X_{2}\to+\infty$ and making use of the asymptotic behavior of $H_{1}(x,t)$ in \eqref{eq-asym-Hamiltonians-fixed-t}, we immediately get \eqref{eq-total-integral-u}.

Now, we turn to the total integral of $H_1(x,t)$, and consider $I(t)$ in \eqref{eq-def-I(t)} and its alternative form \eqref{eq-integral-with-t}. 
According to \cite[Remark 4.3]{Grava-Kapaev-Klein-2015}, one can see that, for any fixed $x$,
\begin{equation}
u(x,t)=\frac{x}{t}+\mathcal{O}(-t)^{-4}, \qquad \textrm{as } t\to-\infty.
\end{equation}
It then follows from \eqref{eq-represent-Hamiltonian-1} that $H_{1}(x,t)=\mathcal{O}(-t)^{-1}$ as $t\to-\infty$, hence $I(-\infty)=0$. 
For any fixed $t$, let us rewrite \eqref{eq-integral-with-t} as 
\begin{equation} \label{eq-I(t)-1}
I(t) = -I(X_{2}, t)+I(X_{1},t)-\frac{1}{24}\log{|T_{2}|}+\frac{1}{24}\log{|T_{1}|},
\end{equation}
where $T_{1}$ and $T_{2}$ are two negative constants chosen to satisfy the relation $T_{1}|X_{1}|^{-\frac{2}{3}}=T_{2}|X_{2}|^{-\frac{2}{3}}$, and 
\begin{eqnarray} 
{\tiny I(X_k,t) = \int_{T_{k}}^{t}\left[\frac{H_{2}(X_{k},\tau)}{3}+\frac{u_{x}(X_{k},\tau)}{12}\right]d\tau+\int_{-\infty}^{T_{k}}\left[\frac{H_{2}(X_{k},\tau)}{3}+\frac{u_{x}(X_{k},\tau)}{12}-\frac{1}{24\tau}\right]d\tau } 
\end{eqnarray}
with $k=1,2$. Note that, to ensure the convergence at $\tau = -\infty$, an additional factor $\frac{1}{24 \tau}$ is introduced in the above integrand.

Next, we study $I(X_{2}, t)$ and $I(X_{1}, t)$  as $X_{2}\to+\infty$ and $X_{1}\to-\infty$,  respectively. Recall the asymptotics of $u_x(x,t)$ and $H_2(x,t)$ in \eqref{eq-asym-ux-uniform} and \eqref{eq-asym-Hamiltonians-uniformly-t-2}, and the fact that both $z_\pm$ and $\mu$ depend on $t$. Let us slightly abuse the notation by setting $\mu=\tau|X_{2}|^{-\frac{2}{3}}$ and $z_{+}:=z_{+}(\tau|X_{2}|^{-\frac{2}{3}})$. As discussed in Remark \ref{rem-error-H}, the error terms in \eqref{eq-asym-ux-uniform} and \eqref{eq-asym-Hamiltonians-uniformly-t-2} are still $o(1)$ after integrating with respect to $t$ from $- \infty$ to any positive fixed constant. Then, we have 
\begin{equation} \label{eq:I2-formual1}
\begin{split}
I(X_{2}, t)
=&\frac{1}{3}\int_{-\infty}^{t}\left[\left(\frac{z_+^5}{320}+\frac{3z_+^2}{8}-\frac{\mu z_+^{3}}{8}\right)|X_{2}|^{\frac{5}{3}}\right]d\tau+\frac{1}{3}\int_{T_{2}}^{t}\left[\frac{z_+^2+8\mu}{(z_+^2-8\mu)^2}|X_{2}|^{-\frac{2}{3}}\right]d\tau\\
&+\frac{1}{3}\int_{-\infty}^{T_{2}}\left[\frac{z_+^2+8\mu}{(z_+^2-8\mu)^2}|X_{2}|^{-\frac{2}{3}}-\frac{1}{8\tau}\right]d\tau+o(1), \qquad \textrm{as } X_{2}\to+\infty.
\end{split}
\end{equation}
Introduce a change of variables $z=z_{+}(\tau|X_{2}|^{-\frac{2}{3}})$ in the above integral. Recalling  $\mu=\frac{z_+^2}{24}+\frac{2}{z_+}$ (cf. \eqref{zpm-def}), we have
\begin{equation}
\frac{d\tau}{dz} = \frac{d\tau}{d \mu} \cdot \frac{d \mu}{dz} = |X_2|^{\frac{2}{3}} \left(\frac{z}{12}-\frac{2}{z^2}\right) .
\end{equation}
Based on the first paragraph in the proof of Lemma \ref{lem-g}, we know $-480^{\frac{1}{3}}<z_{+}(\tau|X_{2}|^{-\frac{2}{3}})<0$ for all $\tau \in (-\infty, t)$. This gives us $\frac{d\tau}{dz} < 0$ for all $\tau \in (-\infty, t)$ and $z_{+}(\tau|X_{2}|^{-\frac{2}{3}}) \to 0-$ as $\tau \to -\infty$. Let us introduce two more notations $z_{0}^{+}=z_{+}(t|X_{2}|^{-\frac{2}{3}})$ and $\tilde{z}_{2}=z_{+}(T_{2}|X_{2}|^{-\frac{2}{3}})$. Then, the integral \eqref{eq:I2-formual1} becomes
\begin{equation}
\begin{split}
I(X_{2}, t)=&\frac{1}{3}|X_{2}|^{\frac{7}{3}}\int_{0}^{z_{0}^{+}}\left[\left(\frac{z^5}{320}+\frac{3z^2}{8}-\left(\frac{z^2}{24}+\frac{2}{z}\right)\frac{z^{3}}{8}\right)\right]\left(\frac{z}{12}-\frac{2}{z^2}\right)dz\\
&+\frac{1}{3}\int_{\tilde{z}_{2}}^{z_{0}^{+}}\left[\frac{z^2+8\mu}{8z(z^2-8\mu)}\right]dz+\frac{1}{4}\int_{0}^{\tilde{z}_{2}}\left[\frac{1}{(z^2-8\mu)\mu}\right]dz+o(1).
\end{split}
\end{equation}
Using $\mu=\frac{z_+^2}{24}+\frac{2}{z_+}$ again to replace the variable $\mu$ in the above formula, we get
\begin{align}
&I(X_{2}, t)
=\frac{1}{3}|X_{2}|^{\frac{7}{3}}\int_{0}^{z_{0}^{+}}\left[\left(\frac{z^5}{320}+\frac{3z^2}{8}-\left(\frac{z^2}{24}+\frac{2}{z}\right)\frac{z^{3}}{8}\right)\right]\left(\frac{z}{12}-\frac{2}{z^2}\right)dz \label{eq-I2-3integrals} \\
&\qquad +\frac{1}{3}\int_{\tilde{z}_{2}}^{z_0^+}\left[\frac{z^3+12}{4z(z^3-24)}\right]dz+\int_{0}^{\tilde{z}_{2}}\left[\frac{9z^2}{(z^3-24)(z^3+48)}\right]dz+o(1),  \quad \textrm{as } X_{2}\to+\infty. \nonumber
\end{align}
In a similar way, with  $ z_{0}^{-}=z_{-}(t|X_{1}|^{-\frac{2}{3}})$ and $\tilde{z}_{1}=z_{-}(T_{1}|X_{1}|^{-\frac{2}{3}})$, we have 
\begin{align}
& I(X_{1}, t)=\frac{1}{3}|X_{1}|^{\frac{7}{3}}\int_{0}^{z_{0}^{-}}\left[\left(\frac{z^5}{320}-\frac{3z^2}{8}-\left(\frac{z^2}{24}-\frac{2}{z}\right)\frac{z^{3}}{8}\right)\right]\left(\frac{z}{12}+\frac{2}{z^2}\right)dz \label{eq-I1-3integrals} \\
&\qquad +\frac{1}{3}\int_{\tilde{z}_{1}}^{z_0^-}\left[\frac{z^3-12}{4z(z^3+24)}\right]dz+\int_{0}^{\tilde{z}_{1}}\left[\frac{-9z^2}{(z^3+24)(z^3-48)}\right]dz+o(1) ,  \quad \textrm{as } X_{1}\to-\infty. \nonumber
\end{align}
Since $z_{+}(\mu)=-z_{-}(\mu)$ for any $\mu\in(-\infty,M]$, we get
\[ \tilde{z}_{1}=z_{-}(T_{1}|X_{1}|^{-\frac{2}{3}}) = - z_{+}(T_{2}|X_{2}|^{-\frac{2}{3}})=-\tilde{z}_{2}\] 
due to our choice  $T_{1}|X_{1}|^{-\frac{2}{3}}=T_{2}|X_{2}|^{-\frac{2}{3}}$. This means that the third integral in \eqref{eq-I2-3integrals} and \eqref{eq-I1-3integrals} are actually the same. In the meantime, the second integral in \eqref{eq-I2-3integrals} and \eqref{eq-I1-3integrals} only differ by a $o(1)$-factor when $|X_{1,2}| \to \infty$ because $z_0^\pm \sim \mp 2\cdot 6^{\frac{1}{3}} + o(1)$. Then, when considering $I(X_1,t ) - I(X_2, t)$, only the first integrals make a contribution. As a consequence, combining \eqref{eq-I(t)-1}, \eqref{eq-I2-3integrals} and \eqref{eq-I1-3integrals}, we have
\begin{equation}
\begin{split}
I(t)=&\frac{|X_{2}|^{\frac{7}{3}}}{3}\int_{0}^{z_{0}^{+}}\frac{(z^3+24)(z^3+60)}{5760}dz\\
&+\frac{|X_{1}|^{\frac{7}{3}}}{3}\int_{0}^{z_{0}^{-}}\frac{(z^3-24)(z^3-60)}{5760}dz+\frac{1}{36}\log{\frac{|X_{1}|}{|X_{2}|}}+o(1),
\end{split}\end{equation}
as $X_{1}\to-\infty$ and $X_{2}\to+\infty$. Obviously, the remaining integrals in the above formula can be computed explicitly. When $t$ is fixed, we further expand $z_{0}^{+}=z_{+}(t|X_{2}|^{-\frac{2}{3}})$ and $ z_{0}^{-}=z_{-}(t|X_{1}|^{-\frac{2}{3}})$ as $|X_{1,2}| \to \infty$ (cf. \eqref{eq-asymp-z-pm}) to obtain
\begin{equation}\label{eq-asym-I(t)}
\begin{split}
I(t)=&\frac{9}{28}|X_{1}|^{\frac{7}{3}}+\frac{9}{5}6^{-\frac{1}{3}}t|X_{1}|^{\frac{5}{3}}+t^2|X_{1}|+\frac{6^{\frac{1}{3}}t^3}{3}|X_{1}|^{\frac{1}{3}}+\frac{1}{36}\log{|X_{1}|}\\
&-\left(\frac{9}{28}|X_{2}|^{\frac{7}{3}}+\frac{9}{5}6^{-\frac{1}{3}}t|X_{2}|^{\frac{5}{3}}+t^2|X_{2}|+\frac{6^{\frac{1}{3}}t^3}{3}|X_{2}|^{\frac{1}{3}}+\frac{1}{36}\log{|X_{2}|}\right)+o(1)
\end{split}
\end{equation}
as $X_{1}\to-\infty$ and $X_{2}\to+\infty$.  The above approximation for the integral $I(t)$ does not really make sense unless we modify the integrand in \eqref{eq-def-I(t)} and replace $H_1(x,t)$ by $H_{1}(x,t)+\frac{3}{4}6^{\frac{1}{3}}x^{\frac{4}{3}}+3\cdot 6^{-\frac{1}{3}}tx^{\frac{2}{3}}+t^2+\frac{6^{\frac{1}{3}}t^3}{9}x^{-\frac{2}{3}}+\frac{x}{36(x^2+1)}$. More precisely, it follows from \eqref{eq-def-I(t)} and \eqref{eq-asym-I(t)} that
\begin{equation}
\int_{X_{1}}^{X_{2}}\left(H_{1}(x,t)+\frac{3}{4}6^{\frac{1}{3}}x^{\frac{4}{3}}+3\cdot 6^{-\frac{1}{3}}tx^{\frac{2}{3}}+t^2+\frac{6^{\frac{1}{3}}t^3}{9}x^{-\frac{2}{3}}+\frac{x}{36(x^2+1)}\right)dx=o(1),
\end{equation}
as $X_{1}\to-\infty$ and $X_{2}\to+\infty$. This formula gives us the desired total integral in \eqref{eq-total-integral-H1}, which finishes the proof of Theorem \ref{Thm-total-integral}. \qed

\section*{Acknowledgments}
Dan Dai was partially supported by a grant from the City University of Hong Kong (Project No. 7005597), and grants
from the Research Grants Council of the Hong Kong Special Administrative Region, China (Project No. CityU 11300520 and CityU 11311622). Wen-Gao Long was partially supported by the Natural Science Foundation of Hunan Province [Grant no. 2020JJ5152],
the General Project of Hunan Provincial Department of Education [Grant no. 19C0771],
and the Doctoral Startup Fund of Hunan University of Science and Technology [Grant no. E51871].

\end{document}